\def\demo{\noindent{\bf Proof. }}
\def\sqr#1#2{{\vcenter{\hrule height.#2pt
        \hbox{\vrule width.#2pt height#1pt \kern#1pt
                \vrule width.#2pt}
        \hrule height.#2pt}}}
\def\square{\mathchoice\sqr64\sqr64\sqr{4}3\sqr{3}3}
\def\QED{\hfill$\square$}
\def\ann{\operatorname{ann}}
\def\lto{\longrightarrow}
\define\Hom{\operatorname{Hom}}
\define\ker{\operatorname{ker}}
\define\im{\operatorname{im}}
\def\Ext{\operatorname{Ext}}
\def\C{\mathcal C}
\def\a{\mathfrak a}
\def\m{\mathfrak m}
\def\htt{\operatorname{ht}}
\def\Supp{\operatorname{Supp}}
\def\ann{\operatorname{ann}}
\def\bs{\bigskip}
\def\m{\mathfrak m}
\def\a{\mathfrak a}
\def\dim{{\rm dim \, }}
\def\bs{\bigskip}
\def\ms{\medskip}
\def\lto{\longrightarrow}
\newtheorem{theorem}{Theorem}[section]
\newtheorem{lemma}[theorem]{Lemma}
\newtheorem{corollary}[theorem]{Corollary}
\newtheorem{proposition}[theorem]{Proposition}
\newtheorem{question}[theorem]{Question}
\newtheorem{def&dis}[theorem]{Definition and Discussion}
\theoremstyle{definition}
\newtheorem{definition}[theorem]{Definition}
\newtheorem{examples}[theorem]{Examples}
\newtheorem{remark}[theorem]{Remark}
\newtheorem{example}[theorem]{Example}
\newtheorem{setting}[theorem]{Setting}
\def\noi{\noindent}
\begin{document}

\baselineskip=16pt

\title[Quasi-cyclic modules and coregular sequences]{ 
Quasi-cyclic modules and coregular sequences}
\date\today

\author[R. Hartshorne \and C. Polini]
{Robin Hartshorne  \and Claudia Polini}

\thanks{AMS 2010 {\em Mathematics Subject Classification}.
Primary 13D45,14B15; Secondary 14M10.}

\keywords{Local cohomology, quasi cyclic modules, coregular sequences, Koszul complexes, Matlis dual.}

\thanks{The second author was partially supported by NSF grant DMS-1601865.}

\address{Department of Mathematics, University of California, Berkeley, California, 94720-3840}\email{robin@math.berkeley.edu}
\address{Department of Mathematics, University of Notre Dame, Notre Dame, Indiana 46556} \email{cpolini@nd.edu}

\begin{abstract}  We develop the theory of coregular sequences and codepth for modules that need not be finitely generated or artinian over a Noetherian ring. We use this theory to give a new version of a theorem of Hellus characterizing set-theoretic complete intersections in terms of local cohomology modules. We also define quasi-cyclic modules as increasing unions of cyclic modules, and show that modules of codepth at least two  are quasi-cyclic. We then focus our attention on curves in $\mathbb P^3$ and give  a number of necessary conditions for a  curve to be a set-theoretic complete intersection. Thus an example of a curve for which any of these necessary conditions does not hold would provide a negative answer to the still open problem, whether every connected curve in $\mathbb P^3$ is a set-theoretic complete intersection. 
\end{abstract}

\maketitle


\section{Introduction}

\medskip

Kronecker \cite{Kr} in 1882 proved a theorem, which in modern language says that any closed algebraic subset of $\Bbb P^n$ can be cut out (set-theoretically) by $n+1$ hypersurfaces. An easy proof was given later by van der Waerden, choosing successively hypersurfaces so that the excess intersection drops in dimension each time until  it is empty.

Vahlen \cite{V} in 1891 published an example to show that Kronecker's bound was best possible. It is a rational quintic curve with a single quadrisecant in $\Bbb P^3$, which he said could not be the intersection of three surfaces.

Thus the problem of finding the minimal number of surfaces needed to cut out a curve in $\Bbb P^3$ was solved, for fifty years, until Perron \cite{P} in 1941 showed that Vahlen's example was wrong. Indeed, Perron exhibited three quintic surfaces whose intersection was Vahlen's curve. More generally, Kneser \cite{Kn} showed in 1960 that any curve in $\Bbb P^3$ could be cut out by three surfaces, and later Eisenbud and Evans \cite{EE} proved an algebraic result, generalizing Kneser's method, which shows that any algebraic subset of $\Bbb P^n$ or $\Bbb A^n$ can be cut out by $n$ hypersurfaces. 

Still, the question whether every irreducible curve in $\Bbb P^3$ is the intersection of two surfaces, in which case we say it is a {\it set-theoretic complete intersection}, remains open. 

Hellus \cite{He} in 2006 gave a criterion for a variety $V$ in $\Bbb P^n$ to be a set-theoretic complete intersection, in terms of the depth of the Matlis dual $D(H^r_I(A))$ of its only non-vanishing local cohomology module. Here $A$ is the coordinate ring of $\Bbb P^n$ and $I$ is the ideal of the variety $V$ of codimension $r$. Hellus's criterion has the advantage of showing that the question whether $V$ is a set-theoretic complete intersection depends only on module-theoretic properties of the module $M=H^r_I(A)$. However, it is impractical, since to find polynomials $f_1, \ldots, f_r\in A$ that form a regular sequence for $D(M)$ is tantamount to finding $f_1, \ldots, f_r$ so that $\sqrt{I}=\sqrt{(f_1, \ldots, f_r)}$, which is  simply a restatement of the problem.  

In this paper, in order to avoid dealing with Matlis duals of large modules, we define in Section~\ref{S1} the notion of {\it coregular sequences} and the corresponding notion of {\it codepth} for an $A$-module $M$. These notions, which have appeared earlier in the context of artinian $A$-modules only (see Example~\ref{ex2.5}), are in some sense dual to the usual notions of regular sequences and depth. We investigate the properties of coregular sequences using Koszul cohomology. 

We then give a new version of Hellus's theorem in  Section~\ref{S2}. It shows that a variety $V$ of codimension $r$ in $\Bbb P^n$ is a set-theoretic complete intersection if and only if $H^i_I(A)=0$ for all $i>r$ and $M=H^r_I(A)$ has codepth $r$. 

In Section~\ref{S3}, we  define the notion of a {\it quasi-cyclic module} to be an increasing union of cyclic submodules, or equivalently, a module in which any two elements are contained in some cyclic submodule. We show that a module of codepth $\ge 2$ is quasi-cyclic. Thus a necessary condition for a curve $\C$ in $\Bbb P^3$  to be a set-theoretic complete intersection is that the associated module $M=H^2_I(A)$ is quasi-cyclic. This property may be more amenable to verification than asking for its codepth. 

In Section~\ref{S4}, we relate the behavior of an element $h\in A$ acting on the local cohomology module $M=H^2_I(A)$ of a curve $\C$ in $\Bbb P^3$ to properties of the surface $X$ defined by $h=0$, and its open subset $X\setminus \C$. 

In Section~\ref{S5}, we interpret the property of a curve being defined set-theoretically by three equations in terms of the Koszul cohomology of $M$.

To sum up, in Section~\ref{S6}, we list a number of questions, starting with the motivating question of this paper: {\it Is every irreducible curve in $\Bbb P^3$ a set-theoretic complete intersection?} and then listing a number of consequences, so that a NO answer to any of these questions would give an example of a curve that is not a set-theoretic complete intersection. 
\bigskip

\section{Coregular sequences and codepth}\label{S1}

Throughout this paper, $A$ will denote a Noetherian local ring or a standard graded ring. If $A$ is a standard graded ring, we assume that all modules are graded and all ideals and elements are homogenous. 

\begin{definition} Let $A$ be a ring, $I$ an ideal, and $M$ a non-zero $A$-module with $\Supp(M)\subset V(I)$. 
A sequence  $x_1, \ldots, x_r$ of elements of $I$ is {\it coregular} for $M$ if  
\begin{enumerate}
\item $x_1M=M$
\item $x_{i+1} \, (0:_M(x_1, \ldots, x_{i}) )=0:_M(x_1, \ldots, x_{i})  \mbox{\quad  for\ } 1 \le i \le r-1\, .$
\end{enumerate}
\noi
In other words multiplication by $x_1$ is surjective on $M$, multiplication by $x_2$ is surjective on the kernel of the multiplication by $x_1$, and so on. 

\end{definition}

\begin{lemma}\label{ann} Let $A$ be a ring, $I$ an ideal, and $M$ a non-zero $A$-module with  $\Supp(M)\subset V(I)$.   If $x\in I$ and $m\in M$, then there exists a power of $x$ that kills $m$. 
\end{lemma}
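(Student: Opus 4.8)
The plan is to reduce the statement to the standard dictionary between the support of a cyclic module and the radical of its annihilator. First I would pass from $m$ to the cyclic submodule $N=Am$ it generates. Since an element $a\in A$ annihilates $m$ precisely when it kills all of $N$, there is an isomorphism $N\cong A/\ann(m)$, and hence
$\Supp(N)=V(\ann(m))$ --- the computation $\Supp(A/J)=V(J)$ being valid for any ring, since $(A/J)_{\p}\neq 0$ iff $J_{\p}\neq A_{\p}$ iff $J\subseteq \p$.

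Next, because $N$ is a submodule of $M$ and localization is exact, $N_{\p}\subseteq M_{\p}$ for every prime $\p$, so $\Supp(N)\subseteq \Supp(M)\subseteq V(I)$, the last containment being the hypothesis. Translating the resulting inclusion $V(\ann(m))\subseteq V(I)$ through the correspondence $V(J)\subseteq V(K)\iff K\subseteq \sqrt{J}$ (which holds since $\sqrt{J}$ is the intersection of the primes in $V(J)$) yields $I\subseteq \sqrt{\ann(m)}$. Finally, since $x\in I\subseteq \sqrt{\ann(m)}$, by definition of the radical some power $x^n$ lies in $\ann(m)$, that is, $x^n m=0$, as desired. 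Note that this gives the conclusion for a single element $x$ directly, so no finite generation of $I$ is needed.

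The argument is essentially formal, so I do not expect a serious obstacle; the only point requiring care is to confirm that each ingredient --- the identity $\Supp(A/\ann(m))=V(\ann(m))$, the monotonicity of support under passage to a submodule, and the radical characterization of inclusions of closed sets --- is valid over an arbitrary (possibly non-Noetherian, possibly graded) ring, since $M$ is \emph{not} assumed finitely generated. Alternatively, one can bypass the support dictionary entirely: if no power of $x$ killed $m$, then $m/1$ would be nonzero in the localization $M_x$, forcing $M_x\neq 0$; hence some prime $\p$ with $x\notin\p$ would lie in $\Supp(M)\subseteq V(I)$, and then $x\in I\subseteq \p$, a contradiction.
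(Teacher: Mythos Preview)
Your argument is correct and is essentially identical to the paper's proof: the paper also passes to the cyclic submodule $mA$, uses $V(\ann_A m)=\Supp(mA)\subset\Supp(M)\subset V(I)$, and concludes $I\subset\sqrt{\ann_A m}$. Your write-up simply spells out the standard facts the paper leaves implicit, and your alternative localization argument is a fine equivalent route.
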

\begin{proof}  Since $mA \subset M$, the support of $mA$ is contained in the support of $M$. Thus
$$V({\rm ann}_A m)=\Supp(mA)\subset \Supp(M)\subset V(I).$$ Hence $I\subset \sqrt{{\rm ann}_A m}$. 
\end{proof}

\begin{proposition}\label{finite} Let $A$ be a ring, $I$ an ideal, and $M$ a non-zero $A$-module with  $\Supp(M)\subset V(I)$. 
Then the length of any coregular sequence $x_1, \ldots, x_r$  in $I$ for $M$ is at most equal to the dimension of $A$. 
\end{proposition}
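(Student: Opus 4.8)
The plan is to extract from a coregular sequence $x_1,\dots,x_r$ a strictly increasing chain of $r+1$ prime ideals of $A$, which immediately forces $\dim A\ge r$. Write $K_0=M$ and $K_i=0:_M(x_1,\dots,x_i)$ for $1\le i\le r$, and set $\mathfrak a_i=\ann_A K_i$. The whole argument is driven by the surjectivity built into coregularity, played off against the local nilpotence supplied by Lemma~\ref{ann}.

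First I would verify that every $K_i$ is nonzero, by induction. On a nonzero $K_i$ the element $x_{i+1}$ acts surjectively (by condition (1) when $i=0$ and condition (2) otherwise) and, since $x_{i+1}\in I$ and $K_i\subseteq M$, locally nilpotently by Lemma~\ref{ann}. A surjective, locally nilpotent endomorphism of a nonzero module cannot be injective (a power annihilating a nonzero element would contradict injectivity of that power), so its kernel $K_{i+1}=0:_{K_i}x_{i+1}$ is nonzero. In particular each $\mathfrak a_i$ is a proper ideal. I would also record two identities for later: $x_{i+1}K_{i+1}=0$ gives $x_{i+1}\in\mathfrak a_{i+1}$, and iterating $x_{i+1}K_i=K_i$ gives $x_{i+1}^m K_i=K_i$ for every $m\ge 1$.

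The crux, and where I expect the genuine obstacle to lie, is locating the primes. One must resist the temptation to find the chain inside $\Supp(M)\subseteq V(I)$, since that set can have dimension far below $\dim A$ (for instance with $A=k[x]$, $I=(x)$ and $M=k[x,x^{-1}]/k[x]$ the support is a single point, yet the coregular length equals $\dim A=1$). The right objects are the annihilator varieties $V(\mathfrak a_i)$, which can be large precisely because $M$ is not finitely generated, so that $\ann_A M$ may be zero. I would build the chain by descending induction. Begin with any prime $\p_r\supseteq\mathfrak a_r$, which exists because $K_r\neq 0$. Given a prime $\p_{i+1}\supseteq\mathfrak a_{i+1}\supseteq\mathfrak a_i$, I claim $\p_{i+1}$ is not minimal over $\mathfrak a_i$. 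Indeed, were it minimal, the localization $(A/\mathfrak a_i)_{\p_{i+1}}$ would be zero-dimensional, hence have nilpotent maximal ideal; since $x_{i+1}\in\mathfrak a_{i+1}\subseteq\p_{i+1}$, some power $x_{i+1}^m$ would map to $0$ there, giving $s\,x_{i+1}^m\in\mathfrak a_i$ for some $s\notin\p_{i+1}$. Then $sK_i=s\,x_{i+1}^m K_i=(s\,x_{i+1}^m)K_i=0$, so $s\in\ann_A K_i=\mathfrak a_i\subseteq\p_{i+1}$, contradicting $s\notin\p_{i+1}$. Therefore there is a prime $\p_i$ with $\mathfrak a_i\subseteq\p_i\subsetneq\p_{i+1}$.

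Carrying this out for $i=r-1,\dots,0$ produces primes $\p_0\subsetneq\p_1\subsetneq\cdots\subsetneq\p_r$ in $\Spec A$, a chain of length $r$, whence $r\le\dim A$. The only delicate step is the localization argument isolating a strictly smaller prime; the rest is bookkeeping. The essential mechanism is that surjectivity forbids any power of $x_{i+1}$ from annihilating $K_i$, so $\sqrt{\mathfrak a_i}\subsetneq\sqrt{\mathfrak a_{i+1}}$ strictly at each stage, and the minimal-prime computation upgrades these strict radical inclusions into an honest chain of primes.
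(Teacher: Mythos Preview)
Your proof is correct and takes a genuinely different route from the paper's. The paper argues by induction on $r$: it replaces $A$ by $A/\ann M$ to make $M$ faithful, observes that a surjective multiplier $x_1$ on a faithful module must be a non-zero-divisor in $A$ (if $x_1y=0$ then $ym=y x_1 m'=0$ for every $m\in M$), and then passes to $A/x_1A$, which drops dimension by one, with the shorter coregular sequence $x_2,\dots,x_r$ acting on $0:_M x_1$. You instead stay in the original ring throughout and manufacture the chain of primes directly from the tower of annihilators $\a_0\subseteq\a_1\subseteq\cdots\subseteq\a_r$, the key point being that no prime containing $\a_{i+1}$ can be minimal over $\a_i$. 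The two arguments share the same kernel: your localization step, unwound, says exactly that $x_{i+1}$ is a non-zero-divisor in $A/\a_i$ (if $s\,x_{i+1}\in\a_i$ then $sK_i=s\,x_{i+1}K_i=0$, so $s\in\a_i$), which is the paper's observation applied with $K_i$ in place of $M$. Your version has the merit of exhibiting the chain of primes explicitly and of clarifying, through your $k[x]$ example, why that chain must live in $V(\ann M)$ rather than in $\Supp M$; the paper's version hews more closely to the classical depth argument and isolates the non-zero-divisor statement as a reusable fact.
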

\demo We proceed by induction on $r\ge 1$. If $x\in I$ is a coregular element for $M$, then we will show that the dimension of $A$ is positive. We can mod out by the annihilator of $M$.  Write $\a=\ann M$ and $A'=A/\a$. We observe that the support of $M$ is still contained in $V(IA')$. Indeed, this is equivalent to $\Supp(M)\subset V(I+\a)$ and since $\a=\ann M$ we clearly have $\Supp(M)\subset V(\a)$. Thus $\Supp(M)\subset V(I) \cap V(\a)=V(I+\a)$.

We have now reduced to the case when $M$ is a faithful $A$-module. We first show that $x$ is a non zero-divisor on $A$. Suppose that $xy=0$ for some $y\in A$. We prove that $y=0$ by showing that $y$ is in the annihilator of $M$. Indeed, for every  $m\in M$ there exists an $m'\in M$ such that $m=xm'$ because $x$ is coregular for $M$. Thus $ym=yxm'=0$. 
Thus $\htt x A\ge 1$ and therefore the dimension of $A$ is at least one. 

For the general case of a coregular sequence, we first reduce to the case $M$ faithful so that $\htt x_1A\ge 1$. Then the ring $A'=A/xA$ has dimension at most $\dim A -1$. The elements $x_2, \ldots, x_r\in IA'$ are still coregular for $M'=0:_M x$. Notice the module $M'$ is non-zero by Lemma~\ref{ann} since the support of $M$ is contained in $V(I)$. Also $\Supp(M')\subset \Supp(M)\subset V(I)=V(IA')$, hence the support of $M'$ is still in $V(IA')$. By induction $\dim A'\ge r-1$, thus $\dim A\ge r$. 
\QED

\begin{definition} Let $A$ be a ring, $I$ an ideal, and $M$ a non-zero $A$-module with $\Supp(M)\subset V(I)$. The $I$-{\it codepth} of $M$ is the supremum of lengths of coregular sequences in $I$ for $M$. 
\end{definition}

Notice that by Proposition~\ref{finite}, if the ring $A$ has finite dimension then the  $I$-{\it codepth} of $M$ is finite. 

\begin{example}\label{ex2.5} Let $(A, \m)$ be a complete Noetherian local ring and let $M$ be an artinian non-zero $A$-module. Then the $\m$-codepth of $M$ is equal to the usual depth of the Matlis dual $D(M)$ of $M$. This follows from the definition, because Matlis duality gives an exact contravariant equivalence of the category of artinian $A$-modules with the category of finitely generated $A$-module. In this context, and for artinian modules only, the notions of coregular sequences (or cosequences) and codepth (or width) were defined by Matlis \cite{Matlis} in his original paper, and have been used by several authors since then. In this paper, however,  most of the modules we consider will not be artinian, and the old theory does not apply.
\end{example}

In the next theorem, inspired by the usual Koszul homology characterization of regular sequences, we use Koszul cohomology to characterize coregular sequences. 

\begin{theorem}\label{Kcoh} Let $A$ be a ring, $I$ an ideal, and $M$ a non-zero $A$-module with  $\Supp(M)\subset V(I)$.  A sequence  $x_1, \ldots, x_r$ of elements of $I$ is  coregular for $M$ if  and only if the Koszul cohomology groups $H^i(x_1, \ldots, x_r;M)$ are zero for all $i\ge 1$. 
\end{theorem}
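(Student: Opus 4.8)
The plan is to argue by induction on the length $r$, peeling off the \emph{last} element $x_r$ so that the recursion matches the definition of coregularity on the nose, and to run both implications simultaneously as a chain of ``if and only if''s. For $r=1$ the Koszul cochain complex $K^\bullet(x_1;M)$ is concentrated in degrees $0$ and $1$, with $H^1(x_1;M)=M/x_1M$ and $H^i=0$ automatically for $i\ge 2$; thus $H^i(x_1;M)=0$ for all $i\ge 1$ exactly when $x_1M=M$, which is the meaning of $x_1$ being coregular.

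For the inductive step I would set $\underline{x}'=x_1,\dots,x_{r-1}$ and use the standard long exact sequence in Koszul cohomology arising from $K_\bullet(\underline{x})=K_\bullet(\underline{x}')\otimes K_\bullet(x_r)$, namely
$$\cdots \to H^{i-1}(\underline{x}';M)\xrightarrow{\pm x_r}H^{i-1}(\underline{x}';M)\to H^{i}(\underline{x};M)\to H^{i}(\underline{x}';M)\xrightarrow{\pm x_r}H^{i}(\underline{x}';M)\to\cdots,$$
whose maps between the $H^\bullet(\underline{x}';M)$ are multiplication by $x_r$. Splitting this into short exact sequences exhibits $H^i(\underline{x};M)$ as an extension of $\ker\!\big(x_r\mid H^i(\underline{x}';M)\big)$ by $\coker\!\big(x_r\mid H^{i-1}(\underline{x}';M)\big)$, so $H^i(\underline{x};M)=0$ if and only if $x_r$ is surjective on $H^{i-1}(\underline{x}';M)$ and injective on $H^i(\underline{x}';M)$.

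The crux — and the step I expect to be the main obstacle, or at least the place where the support hypothesis is genuinely used — is to upgrade ``injective'' to ``zero''. Each $H^i(\underline{x}';M)$ is a subquotient of $M^{\binom{r-1}{i}}$, and by Lemma~\ref{ann} every element of $M$, hence of this subquotient, is annihilated by some power of $x_r\in I$. Since an injective \emph{and} locally nilpotent endomorphism forces the module to vanish, $x_r$ is injective on $H^i(\underline{x}';M)$ if and only if $H^i(\underline{x}';M)=0$. Feeding this back into the criterion of the previous paragraph, $H^i(\underline{x};M)=0$ for all $i\ge 1$ holds if and only if $H^i(\underline{x}';M)=0$ for all $i\ge 1$ together with $x_r$ being surjective on $H^0(\underline{x}';M)=0:_M(x_1,\dots,x_{r-1})$; here the higher surjectivity requirements are automatic, because once $H^{i-1}(\underline{x}';M)=0$ for $i-1\ge1$ surjectivity onto the zero module is trivial, so the whole family of surjectivity conditions collapses to the single one on $H^0$.

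Finally I would invoke the inductive hypothesis applied to $\underline{x}'$ to replace ``$H^i(\underline{x}';M)=0$ for all $i\ge 1$'' by ``$x_1,\dots,x_{r-1}$ is coregular for $M$.'' Combined with the surjectivity of $x_r$ on $0:_M(x_1,\dots,x_{r-1})$ — which is precisely condition~(2) at $i=r-1$ — this is exactly the statement that $x_1,\dots,x_r$ is coregular for $M$, completing the induction. Apart from the local-nilpotence trick, everything is formal manipulation of the long exact sequence; the care required is in checking that this trick applies to the Koszul cohomology modules and in the bookkeeping that collapses the surjectivity conditions to the one genuinely new constraint.
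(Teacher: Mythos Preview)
Your proposal is correct and follows essentially the same route as the paper: induction on $r$, the long exact sequence obtained by splitting off $x_r$, and the key observation (via Lemma~\ref{ann}) that multiplication by $x_r$ is locally nilpotent on each $H^i(\underline{x}';M)$, so injectivity forces vanishing. The only cosmetic difference is that the paper treats the two implications separately and phrases the converse step as ``$\delta_i$ is an isomorphism for $i\ge 1$'' (combining injectivity on $H^i$ with surjectivity coming from $H^{i+1}(\underline{x};M)=0$), whereas you run the whole argument as a single chain of equivalences using injectivity directly; the substance is identical.
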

\demo We proceed by induction on $r\ge 1$. If $r=1$, the Koszul complex is $M\stackrel{x_1}{\lto} M$. Notice that $H^0(x_1;M)=0:_Mx_1$ and  $H^1(x_1;M)=M/x_1M$. The latter is zero if and only if $x_1$ acts surjectively on $M$. All higher cohomologies are zero.  

If $r\ge2$ we use the short exact sequence of Koszul complexes
$$0\lto K^{\bullet}((x_1, \ldots, x_{r-1};M)[-1]\lto K^{\bullet}((x_1, \ldots, x_{r};M)\lto K^{\bullet}((x_1, \ldots, x_{r-1};M)\lto 0$$
The associated long exact sequence of cohomology is
$$ \stackrel{\delta_{i-1}}{\lto}H^{i-1}(x_1, \ldots, x_{r-1};M)\lto H^i(x_1, \ldots, x_r;M) \lto H^i(x_1, \ldots, x_{r-1};M)\stackrel{\delta_i}{\lto}H^i(x_1, \ldots, x_{r-1};M)$$
and the connecting homomorphisms $\delta_i$ are just multiplication by $x_r$ in the corresponding cohomology groups. 

Now suppose first that the sequence $x_1, \ldots, x_r$ is coregular for $M$. This implies that $x_1, \ldots, x_{r-1}$  is coregular for $M$ and that multiplication by $x_r$ acts surjectively on $$0:_M(x_1, \ldots, x_{r-1}) =H^{0}(x_1, \ldots, x_{r-1};M)\, .$$
Thus by induction $H^{i}(x_1, \ldots, x_{r-1};M)=0$ for all $i\ge 1$ and $\delta_0$ is surjective on $H^{0}(x_1, \ldots, x_{r-1};M)$. From the long exact sequence of cohomology we conclude that $H^{i}(x_1, \ldots, x_{r};M)=0$ for all $i\ge 1$. 

Now suppose, conversely, that $H^{i}(x_1, \ldots, x_{r};M)=0$ for all $i\ge 1$. Then by the long exact sequence of cohomology, the connecting homomorphisms $\delta_i$ are isomorphisms for $i\ge 1$ and surjective for $i=0$. However, the modules  $H^{i}(x_1, \ldots, x_{r-1};M)$ are all subquotients of sums of copies of $M$, hence have support in $V(I)$. Now by Lemma~\ref{ann}, every element of $M$ or any of its subquotients is annihilated by a power of $x_r$, since $x_r\in I$. Thus the only way multiplication by $x_r$ can be an isomorphism is that the modules $H^{i}(x_1, \ldots, x_{r-1};M)$ are zero. Hence, by induction, the sequence $x_1, \ldots, x_{r-1}$ is coregular for $M$. Furthermore, since $\delta_0$ is surjective on 
$H^{0}(x_1, \ldots, x_{r-1};M)=0:_M(x_1, \ldots, x_{r-1}) $, we see that $x_r$ acts surjectively on $0:_M(x_1, \ldots, x_{r-1}) $ and therefore the sequence $x_1, \ldots, x_r$ is coregular for $M$. 

\QED

\begin{corollary}\label{ORD} Let $A$ be a ring, $I$ an ideal, and $M$ a non-zero $A$-module with  $\Supp(M)\subset V(I)$. Any permutation of a coregular sequence  $\underline{x}\subset I$ for $M$ is a coregular sequence for $M$. 
\end{corollary}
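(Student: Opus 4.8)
The plan is to deduce this immediately from the Koszul cohomology characterization in Theorem~\ref{Kcoh}. That theorem asserts that a sequence $x_1, \ldots, x_r$ in $I$ is coregular for $M$ precisely when $H^i(x_1, \ldots, x_r; M) = 0$ for all $i \ge 1$. Since the hypothesis $\Supp(M) \subset V(I)$ and the requirement that the elements lie in $I$ are both unaffected by reordering, it suffices to show that the Koszul cohomology modules $H^i(x_1, \ldots, x_r; M)$ are, up to isomorphism, independent of the order in which the $x_j$ are listed.

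To see this, recall that the Koszul complex $K_\bullet(x_1, \ldots, x_r)$ is the exterior algebra $\bigwedge^\bullet A^r$ on a free module with basis $e_1, \ldots, e_r$, equipped with the differential determined by $e_j \mapsto x_j$. Given a permutation $\sigma$ of $\{1, \ldots, r\}$, the linear automorphism of $A^r$ sending $e_j \mapsto e_{\sigma(j)}$ extends to an isomorphism of graded algebras $\bigwedge^\bullet A^r \to \bigwedge^\bullet A^r$ that intertwines the differential of $K_\bullet(x_1, \ldots, x_r)$ with that of $K_\bullet(x_{\sigma(1)}, \ldots, x_{\sigma(r)})$. Applying the functor that defines Koszul cohomology (i.e. $\Hom_A(-, M)$ on the relevant complex) yields an isomorphism of cochain complexes, hence isomorphisms $H^i(x_1, \ldots, x_r; M) \cong H^i(x_{\sigma(1)}, \ldots, x_{\sigma(r)}; M)$ for every $i$. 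Combining the two observations gives the result at once: if $x_1, \ldots, x_r$ is coregular, then all its higher Koszul cohomology vanishes by Theorem~\ref{Kcoh}, the isomorphism transports this vanishing to any permutation, and a second application of Theorem~\ref{Kcoh} shows the permuted sequence is coregular.

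The only point requiring care is the sign bookkeeping in the exterior-algebra automorphism, namely verifying that $e_j \mapsto e_{\sigma(j)}$ genuinely commutes with the differentials up to the expected signs. This is the standard fact that Koszul (co)homology is symmetric in its arguments, and one may reduce it to the case of a transposition of two adjacent indices, where the check is immediate and the general case follows by composing such transpositions. I expect this to be the main, though entirely routine, obstacle; everything else is a formal consequence of Theorem~\ref{Kcoh}.
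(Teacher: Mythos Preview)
Your proof is correct and follows exactly the same approach as the paper: both deduce the result from Theorem~\ref{Kcoh} together with the standard fact that the Koszul complex $K^{\bullet}(x_1,\ldots,x_r;M)$ is independent of the order of the $x_i$. You simply spell out the exterior-algebra isomorphism that the paper takes for granted.
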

\begin{proof}
It follows from Theorem~\ref{Kcoh} since the Koszul complex $K^{\bullet}(x_1, \ldots, x_n;M)$ is independent of the order of the $x_i$. 
\end{proof}

We must warn the reader that, unlike the case of regular sequences for finitely generated modules,  a partial coregular sequence cannot always be extended to a  coregular sequence  of length equal to the codepth (see, for instance, Example~\ref{TC}). Further, observe that a finitely generated module has always codepth zero. 

\medskip

\begin{proposition}\label{extcor} Let $A$ be a ring and $I$ an ideal.
Consider a short exact sequence of $A$-modules with support contained in $V(I)$
$$0\lto M' \lto M\lto M''\lto 0\, .
$$
Let $\underline{x}=x_1, \ldots, x_r$  be a sequence of elements of $I$. 
\begin{enumerate}
\item
If $\underline{x}$ is a coregular sequence for $M'$ and $M''$ then it is a coregular sequence for  $M$. 
\item If $\underline{x}$ is a coregular sequence for $M$ then $\underline{x}$ is a coregular sequence for  $M''$ if and only if 
\[H^i(x_1, \ldots, x_{r}; M')=0\]
for all $i\ge 2$. 
\item If $x_1$ is coregular for $M$, it is also coregular for $M''$. 
\end{enumerate}
\end{proposition}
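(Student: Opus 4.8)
The common engine for all three parts will be the long exact sequence in Koszul cohomology attached to the given short exact sequence. The plan is first to note that $K^{\bullet}(\underline{x}; N)$ is obtained from the free (hence flat) complex $K^{\bullet}(\underline{x}; A)$ by tensoring with $N$, so that applying $-\otimes_A K^{\bullet}(\underline{x}; A)$ to $0 \lto M' \lto M \lto M'' \lto 0$ produces a short exact sequence of complexes
$$0 \lto K^{\bullet}(\underline{x}; M') \lto K^{\bullet}(\underline{x}; M) \lto K^{\bullet}(\underline{x}; M'') \lto 0 \, .$$
Its associated long exact sequence of cohomology,
$$\cdots \lto H^i(\underline{x}; M') \lto H^i(\underline{x}; M) \lto H^i(\underline{x}; M'') \stackrel{\delta}{\lto} H^{i+1}(\underline{x}; M') \lto \cdots \, ,$$
together with the characterization of coregularity in Theorem~\ref{Kcoh}, reduces every assertion to bookkeeping with vanishing of cohomology. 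Throughout I would use that all three modules have support in $V(I)$, as supplied by the hypothesis.

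For part (1), Theorem~\ref{Kcoh} turns the coregularity of $\underline{x}$ on $M'$ and on $M''$ into the vanishing $H^i(\underline{x}; M') = H^i(\underline{x}; M'') = 0$ for all $i \ge 1$. In the long exact sequence these two groups flank $H^i(\underline{x}; M)$ for each $i \ge 1$, forcing $H^i(\underline{x}; M) = 0$ there; since the nonzero $M'$ injects into $M$ we have $M \ne 0$, so a second application of Theorem~\ref{Kcoh} gives that $\underline{x}$ is coregular for $M$.

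For part (2), coregularity of $\underline{x}$ on $M$ gives $H^i(\underline{x}; M) = 0$ for all $i \ge 1$. Inserting this into the long exact sequence collapses it, for each $i \ge 1$, to an isomorphism $\delta \colon H^i(\underline{x}; M'') \stackrel{\sim}{\lto} H^{i+1}(\underline{x}; M')$, because the neighboring groups $H^i(\underline{x}; M)$ and $H^{i+1}(\underline{x}; M)$ both vanish. Hence $H^i(\underline{x}; M'') = 0$ for all $i \ge 1$ if and only if $H^j(\underline{x}; M') = 0$ for all $j \ge 2$; by Theorem~\ref{Kcoh} the first condition is precisely coregularity of $\underline{x}$ on $M''$, which is the claimed equivalence.

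Part (3) needs no machinery beyond the definition: the surjection $M \twoheadrightarrow M''$ together with the hypothesis $x_1 M = M$ immediately yields $x_1 M'' = M''$, so $x_1$ is coregular for $M''$. The only point demanding any care is the very first one, namely that $N \mapsto K^{\bullet}(\underline{x}; N)$ is an exact functor and hence produces the short exact sequence of complexes above; this is routine, following from the freeness of each term of $K^{\bullet}(\underline{x}; A)$, and once it is granted the three statements are formal consequences of the long exact sequence and Theorem~\ref{Kcoh}.
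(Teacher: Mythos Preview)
Your proof is correct and follows exactly the approach the paper intends: the paper's own proof consists of the single sentence ``These statements follow easily from Theorem~\ref{Kcoh} using the long exact sequence of Koszul cohomology of $x_1, \ldots, x_r$ associated to the short exact sequence of modules,'' and you have faithfully supplied those details.
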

\begin{proof} 
These statements follow easily  from Theorem~\ref{Kcoh}  using the long exact sequence of Koszul cohomology of $x_1, \ldots, x_r$ associated to the short exact sequence of modules. 
\end{proof}

\begin{corollary}\label{c1.7} Let $A$ be a ring, $I$ an ideal, and $M$ a non-zero $A$-module with  $\Supp(M)\subset V(I)$. Let $h\in A$ with $M/hM\not=0$. If $x_1, x_2\in I $ is a coregular sequence for $M$, then it is also a coregular sequence for $M/hM$. 
\end{corollary}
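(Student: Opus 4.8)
The plan is to translate the statement into the Koszul cohomology criterion of Theorem~\ref{Kcoh} and then exploit the fact that the sequence has length exactly two. For a two-element sequence the Koszul complex is concentrated in cohomological degrees $0,1,2$, so $H^i(x_1,x_2;N)=0$ for all $i\ge 3$ and every $A$-module $N$. Consequently, to prove that $x_1,x_2$ is coregular for $M/hM$ it suffices, by Theorem~\ref{Kcoh}, to check the two vanishings $H^1(x_1,x_2;M/hM)=0$ and $H^2(x_1,x_2;M/hM)=0$. Before starting, I would record that $M/hM$, $hM$, and $0:_Mh$ are all subquotients of $M$ and hence have support inside $V(I)$, and that $M/hM\ne 0$ by hypothesis, so the conclusion is meaningful; I would also note that Theorem~\ref{Kcoh} applied to $M$ turns the hypothesis into $H^i(x_1,x_2;M)=0$ for all $i\ge 1$.

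Next I would feed two short exact sequences into the long exact sequence of Koszul cohomology attached to $x_1,x_2$. The first is the defining sequence of the quotient,
\[0\lto hM\lto M\lto M/hM\lto 0,\]
and the second presents $hM$ as a homomorphic image of $M$ under multiplication by $h$,
\[0\lto 0:_Mh\lto M\stackrel{h}{\lto} hM\lto 0\]
(up to a degree shift in the graded case, which affects no vanishing). From the long exact sequence of the second one, the segment $H^2(x_1,x_2;M)\to H^2(x_1,x_2;hM)\to H^3(x_1,x_2;0:_Mh)$ reads $0\to H^2(x_1,x_2;hM)\to 0$, giving $H^2(x_1,x_2;hM)=0$.

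Feeding this into the long exact sequence of the first short exact sequence, the two relevant segments become $0=H^1(x_1,x_2;M)\to H^1(x_1,x_2;M/hM)\to H^2(x_1,x_2;hM)=0$ and $0=H^2(x_1,x_2;M)\to H^2(x_1,x_2;M/hM)\to H^3(x_1,x_2;hM)=0$, which force both $H^1(x_1,x_2;M/hM)=0$ and $H^2(x_1,x_2;M/hM)=0$. By Theorem~\ref{Kcoh} this shows that $x_1,x_2$ is coregular for $M/hM$.

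The only real obstacle---and the reason the statement is confined to sequences of length two---is obtaining the vanishing of $H^2(x_1,x_2;hM)$, which comes for free precisely because $H^3$ of a length-two Koszul complex is identically zero. For a longer sequence the same bookkeeping would control only the top two Koszul cohomology groups of $M/hM$, leaving the intermediate ones uncontrolled, so this clean argument would not extend; this is consistent with the earlier warning that coregular sequences behave differently from ordinary regular sequences.
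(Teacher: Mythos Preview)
Your proof is correct and follows essentially the same approach as the paper: the paper also splits the map $M\stackrel{h}{\to}M$ into the two short exact sequences $0\to\ker\varphi\to M\to\im\varphi\to 0$ and $0\to\im\varphi\to M\to M/hM\to 0$, deduces $H^2(x_1,x_2;\im\varphi)=0$ from the first (phrased as ``$H^2(x_1,x_2;\bullet)$ is right exact''), and then concludes via Proposition~\ref{extcor}(2), which is exactly the long exact sequence argument you spell out. Your explicit identification of why the argument is limited to length two is a nice addition.
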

\begin{proof} 
Let $\varphi $ be the map induced by multiplication by $h$ on $M$. Then we have the short exact sequences

$$0\lto \ker \varphi \lto M\lto\im \varphi\lto 0
$$
and 
$$0\lto \im \varphi \lto M\lto M/hM\lto 0\, .
$$
From the first exact sequence, since the functor $H^2(x_1, x_2; \bullet)$ is right exact, we conclude that $$H^2(x_1, x_2; \im \varphi)=0. $$ From Proposition~\ref{extcor}(b) and the second sequence, we conclude that $x_1, x_2$ is a coregular sequence for $M/hM$.  \end{proof}

\section{Hellus's theorem}\label{S2}

In this section we give a new version of Hellus's theorem, in terms of coregular sequences. It gives a criterion for an ideal to be generated up to radical by a regular sequence. In the case of the homogeneous coordinate ring of a projective space, it gives a criterion for a variety to be a set-theoretic complete intersection. 

\begin{theorem}\label{Hellus} Let $A$ be a ring, $I$ an ideal, and let $x_1, \ldots, x_r$ be an $A$-regular sequence contained in $I$. Then the following conditions are equivalent
\begin{itemize}
\item[(i)] $\sqrt{I} = \sqrt{(x_1, \ldots, x_r)} \, $
\item[(ii)]   
$x_1, \ldots,x_r$ form a coregular sequence for $H^r_I(A)$, and $H^i_I(A)=0$ for all $i>r$.

\end{itemize}

\end{theorem}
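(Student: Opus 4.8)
The plan is to prove both implications by induction on $r$, built around the short exact sequence
$0\to A\xrightarrow{x_1}A\to A/x_1A\to 0$ (valid since $x_1$ is a nonzerodivisor) and its long exact sequence of local cohomology. First I would record two preliminary remarks used throughout: because $x_1,\ldots,x_r$ is a regular sequence in $I$ we have $\grade(I,A)\ge r$, so $H^i_I(A)=0$ for all $i<r$; and because $J=(x_1,\ldots,x_r)$ is generated by $r$ elements, $H^i_J(A)=0$ for $i>r$, while $H^i_J(A)=H^i_I(A)$ for all $i$ once $\sqrt I=\sqrt J$ is known. Thus in either direction the vanishing statements reduce to standard facts about regular sequences.

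The engine of the induction is the long exact sequence of multiplication by $x_1$. Writing $\bar A=A/x_1A$, $\bar I=I\bar A$ and $M=H^r_I(A)$, the vanishing $H^{r-1}_I(A)=0$ gives a natural identification $0:_M x_1\cong H^{r-1}_{\bar I}(\bar A)$, while $H^r_I(\bar A)\cong M/x_1M$. Moreover $\bar x_2,\ldots,\bar x_r$ is an $\bar A$-regular sequence in $\bar I$, and — this is the bookkeeping that makes the induction run — the coregularity conditions (2) for $x_1,\ldots,x_r$ on $M$ are, term by term, exactly the coregularity conditions for $\bar x_2,\ldots,\bar x_r$ on $0:_M x_1=H^{r-1}_{\bar I}(\bar A)$, since $0:_M(x_1,\ldots,x_j)=0:_{(0:_Mx_1)}(x_2,\ldots,x_j)$.

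For (i) $\Rightarrow$ (ii) I would first replace $I$ by $J$, as local cohomology depends only on $\sqrt I$; then $H^i_J(A)=0$ for $i\ne r$ is automatic, and coregularity of $x_1,\ldots,x_r$ on $M=H^r_J(A)$ follows by induction from the identification above. Here the surjectivity $x_1M=M$ (condition (1)) is free, because $M/x_1M\cong H^r_{(\bar x_2,\ldots,\bar x_r)}(A/x_1A)=0$ is the top cohomology of an ideal with only $r-1$ generators, and the remaining conditions are the coregularity of $\bar x_2,\ldots,\bar x_r$ on $0:_Mx_1\cong H^{r-1}_{(\bar x_2,\ldots,\bar x_r)}(A/x_1A)$, which holds by the inductive hypothesis. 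For (ii) $\Rightarrow$ (i) the hypotheses give $H^i_I(A)=0$ for $i\ne r$ and $x_1,\ldots,x_r$ coregular on $M$; in particular $x_1M=M$, so $H^r_I(\bar A)=M/x_1M=0$, which together with the vanishing yields $H^i_{\bar I}(\bar A)=0$ for all $i\ge r$. Since $M\ne 0$ and $x_1$ is locally nilpotent on $M$ by Lemma~\ref{ann}, surjectivity of $x_1$ forces $0:_Mx_1\ne 0$, so $H^{r-1}_{\bar I}(\bar A)\ne 0$ carries the coregular sequence $\bar x_2,\ldots,\bar x_r$; by induction $\sqrt{\bar I}=\sqrt{(\bar x_2,\ldots,\bar x_r)}$, which pulls back to $\sqrt I=\sqrt J$.

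The main obstacle is the base case $r=1$ of (ii) $\Rightarrow$ (i), and this is exactly where I expect to need the standing hypothesis that $A$ is local (or standard graded with $I$ homogeneous). There $x_1M=M$ kills $H^1_I(\bar A)=M/x_1M$, so $H^i_I(\bar A)=0$ for all $i\ge 1$; passing to $\bar A'=\bar A/\Gamma_I(\bar A)$, which is $I$-torsion-free, gives $H^i_I(\bar A')=0$ for every $i$, hence $V(I)\cap\Supp\bar A'=\emptyset$. If $\bar A'$ were nonzero it would be a nonzero finitely generated (graded) module over the local (graded) ring $A$, so its support would contain the maximal (irrelevant) ideal $\m\supseteq I$, contradicting $V(I)\cap\Supp\bar A'=\emptyset$. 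Hence $\bar A'=0$, i.e. $A/x_1A$ is $I$-torsion and $V(x_1)\subseteq V(I)$; combined with $(x_1)\subseteq I$ this gives $\sqrt I=\sqrt{(x_1)}$. It is worth isolating this local-versus-global point, since it is the only place where the finiteness afforded by the local/graded hypothesis is essential: the Koszul-cohomological conditions by themselves cannot detect the part of $\Supp(A/x_1A)$ lying off $V(I)$.
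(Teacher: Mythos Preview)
Your proof is correct. For the implication (ii) $\Rightarrow$ (i), your inductive argument via the long exact sequence of local cohomology for $0\to A\to A\to A/x_1A\to 0$ is exactly what the paper does: the paper phrases it as an iterative procedure rather than a formal induction, and your base case $r=1$ is precisely the content of the paper's Lemma~\ref{zero} (applied to $N=A/(x_1,\ldots,x_r)$). Your observation that the local/graded hypothesis is used only to guarantee $I\ne A$ (equivalently, that the module in question has the maximal ideal in its support) is on target.

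For (i) $\Rightarrow$ (ii) the two arguments genuinely diverge. You run the same induction on $r$, reading coregularity of $x_1,\ldots,x_r$ on $M$ directly from the definition: $x_1M=M$ because $M/x_1M\cong H^r_{(\bar x_2,\ldots,\bar x_r)}(\bar A)=0$, and the remaining conditions descend to $0:_Mx_1\cong H^{r-1}_{(\bar x_2,\ldots,\bar x_r)}(\bar A)$. The paper instead invokes its Koszul characterization (Theorem~\ref{Kcoh}) and computes the Koszul cohomology as Ext: since $K_\bullet(x_1,\ldots,x_r)$ resolves $A/I$, one has $H^i(\underline{x};M)=\Ext^i(A/I,M)$; and since $H^j_I(A)=0$ for $j\ne r$, applying $\Gamma_I$ to an injective resolution of $A$ yields an injective resolution of $M$ shifted by $r$, so $\Ext^i(A/I,M)=\Ext^{i+r}(A/I,A)=0$ for $i>0$. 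Your route is more elementary and self-contained, avoiding both Theorem~\ref{Kcoh} and the injective-resolution trick; the paper's route is shorter once those tools are in hand and makes the link to $\Ext^{i+r}(A/I,A)$ explicit, which is conceptually pleasant. Either way, the two directions of the theorem end up resting on the same long exact sequence; it is only in packaging the forward direction that the approaches differ.
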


\begin{proof} First notice that since $x_1, \ldots, x_r$ form a regular sequence for $A$, we have $H^i_I(A)=0$ for all $i<r$. This is the usual local cohomology characterization of $I$-depth of $A$. 

$(i) \Longrightarrow (ii) $ 
The vanishing of the local cohomology 
for $i>r$  follows from the $\breve{\rm C}$ech computation of local cohomology. 

Since local cohomology depends only on the radical of the ideal, we may assume that $I=(x_1, \ldots, x_r)$. To show that $x_1, \ldots, x_r$ form a coregular sequence on $M=H^r_I(A)$, we use the Koszul complex $K^{\bullet}(x_1, \ldots, x_{r};M)$. Since $x_1, \ldots, x_r$ is  an $A$-regular sequence, the Koszul complex $K_{\bullet}(x_1, \ldots, x_{r})$ is a resolution of $A/I$ and therefore the cohomology groups of the complex $K^{\bullet}(x_1, \ldots, x_{r};M)$ are  the $\Ext^i(A/I, M)$. 
In order to compute $\Ext^i(A/I, M)$, we will use an injective resolution of $M$ obtained from an injective resolution of $A$ by applying the functor $\Gamma_I$. Since the  $H^i_I(A)=0$ for all $i\not=r$, this complex is exact everywhere except for $i=r$ where its cohomology is $M$, and therefore it gives an injective resolution of $M$. This also shows that
$\Ext^i(A/I, M)=\Ext^{i+r}(A/I, A)$. But the latter modules are zero for all $i+r\not=r$ and hence $H^i(x_1, \ldots, x_r; M)=0$ for all $i>0$. Thus $x_1, \ldots, x_r$ form a coregular sequence for $M$ by Theorem~\ref{Kcoh}. 
\ms

\hspace{.6 cm}$(ii) \Longrightarrow (i) $ Since $x_1, \ldots, x_r$ is a regular sequence on $A$, we can write 
\[
0\lto A \overset{x_1}\longrightarrow A \lto A_1\lto 0
\]
where $A_1=A/(x_1)$. Running the long exact sequence of local cohomology with supports in $I$, we find, since $x_1$ acts surjectively on $M=H^r_I(A)$, that $M_1=H^{r-1}_I(A_1)$ is the only  local cohomology group of $A_1$ that is non zero. Since $M_1$ is the kernel of multiplication by $x_1$ on $M$, we find that $x_2, \ldots, x_r$ is a coregular sequence for $M_1$. 

Next we use the exact sequence 
\[
0\lto A_1 \overset{x_2}\longrightarrow A_1 \lto A_2\lto 0
\]
where $A_2=A/(x_1, x_2)$, and we find similarly that $M_2=H^{r-2}_I(A_2)$ is the only non-zero local cohomology group of $A_2$, and $x_3, \ldots, x_r$ is a coregular sequence for $M_2$. 

Proceeding inductively, we find that $A_r=A/(x_1, \ldots, x_r)$ has only one non-zero  local cohomology group $M_r=H^{0}_I(A_r)$. It follows by Lemma~\ref{zero} below that $A_r$ has support in $V(I)$. Since the annihilator of $A_r$ is $(x_1, \ldots, x_r)$, this shows that $I\subset \sqrt{(x_1, \ldots, x_r)}$. But $x_1, \ldots, x_r\in I$ by hypothesis, so $\sqrt{I}=\sqrt{(x_1, \ldots, x_r)}$. 

\end{proof}

\begin{lemma}\label{zero} Let $A$ be a ring, $I$ an ideal, and $N$ a finitely generated $A$-module with $IN\not=N$. If $H^i_I(N)=0$ for all $i>0$, then  $\Supp(N)\subset V(I)$ and $N=H^0_I(N)$. 
\end{lemma}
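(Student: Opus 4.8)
The plan is to show that the two conclusions are in fact equivalent and to reduce everything to proving that the $I$-torsion-free quotient $\bar N := N/H^0_I(N)$ vanishes. For a finitely generated module $N$ over the Noetherian ring $A$, being $I$-torsion (that is, $N = H^0_I(N)$) is the same as $I \subseteq \sqrt{\ann N}$, which is exactly the condition $\Supp(N) \subseteq V(I)$. So both assertions of the lemma follow at once from $\bar N = 0$.

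First I would record the local cohomology of $\bar N$. Writing $T = H^0_I(N)$, the module $T$ is $I$-torsion, hence $H^0_I(T) = T$ and $H^i_I(T) = 0$ for all $i > 0$. Feeding the short exact sequence $0 \to T \to N \to \bar N \to 0$ into the long exact sequence of local cohomology yields $H^0_I(\bar N) = 0$ (re-expressing that $\bar N$ has no $I$-torsion) and isomorphisms $H^i_I(\bar N) \cong H^i_I(N)$ for every $i \ge 1$. By hypothesis the groups on the right vanish for $i \ge 1$, so in fact $H^i_I(\bar N) = 0$ for all $i \ge 0$.

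Now suppose for contradiction that $\bar N \neq 0$. Since $IN \neq N$, the ideal $I$ is proper, hence contained in the maximal ideal $\m$ in the local case (respectively in the irrelevant ideal $A_+$ in the graded case, a proper homogeneous ideal meeting $A_0$ only in $0$). Nakayama's lemma, in its graded version for the second case, then forces $I \bar N \neq \bar N$, since otherwise $\bar N = 0$. For a nonzero finitely generated module with $I \bar N \neq \bar N$, the grade $\grade(I, \bar N)$ is finite and equals $\inf\{\, i : H^i_I(\bar N) \neq 0 \,\}$ by Grothendieck's vanishing/non-vanishing theorem; in particular some $H^i_I(\bar N)$ is nonzero, contradicting the vanishing established above. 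Hence $\bar N = 0$, i.e. $N = H^0_I(N)$ and $\Supp(N) \subseteq V(I)$.

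The only substantive input is the finiteness of $\grade(I, \bar N)$ together with its interpretation as the first nonvanishing degree of local cohomology; the role of the hypothesis $IN \neq N$ is precisely to guarantee, through Nakayama, that $I \bar N \neq \bar N$, so that this finiteness is available. I do not expect any genuine obstacle beyond invoking this standard theorem correctly in both the local and the graded settings.
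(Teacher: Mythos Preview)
Your proof is correct and follows essentially the same route as the paper's: both pass to the quotient $C=\bar N=N/H^0_I(N)$, use the long exact sequence to see that all $H^i_I(C)$ vanish, and then invoke the characterization of $I$-depth as the least nonvanishing index of local cohomology to force $C=0$. The only difference is cosmetic: you spell out the Nakayama step showing $I\bar N\neq\bar N$, whereas the paper absorbs this into the citation of Matsumura's result on $I$-depth.
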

\begin{proof} We write the short exact sequence
\[0\lto H^0_I(N) \lto N \lto C\lto 0\]
where $C$ is another finitely generated $A$-module.  Our hypothesis implies that $H^i_I(C)=0$ for all $i$. But this is impossible unless $C=0$, because a non-zero finitely generated module has a well-defined $I$-depth, namely, the smallest $r$ such that $H^r_I(C)\not=0$ \cite[16.7]{M}.
\end{proof}

\begin{remark} In his paper, Hellus proves an equivalent version of Theorem~\ref{Hellus} for a Noetherian local ring $(A, \m)$. Instead of the notion of codepth, he uses the Matlis dual functor $D(\bullet)=\Hom_A(\bullet, E)$, where $E$ is an injective envelope of $k=A/\m$. Then his criterion is that $x_1, \ldots, x_r$ form a regular sequence for  $D(H^r_I(A))$. One can verify that this is equivalent to saying that $x_1, \ldots, x_r$ form a coregular sequence for $M$. 
\end{remark}

\begin{example}[\bf The twisted cubic curve]\label{TC} {\rm Let $\C$ be a twisted cubic curve in $\mathbb P^3$. One knows that $\C$ is a set-theoretic complete intersection, since it lies on a quadric cone $Q_0$, and on that cone, $2\C$ is a Cartier divisor cut out by a cubic surface $F$, so that $\C=Q_0\cap F$. Hence by Theorem~\ref{Hellus}, $M=H^2_I(A)$ has codepth 2, where $I$ is the ideal of $\C$ in the homogeneous coordinate ring $A$.

On the other hand, $\C$ lies on a non-singular quadric surface $Q$, and since $Q\setminus \C$ is affine (see \cite[\S V  1.10.1]{AG}), it follows that $H^2_I(A/qA)=0$, where $q$ is the equation defining $Q$. Therefore $q$ is a coregular element for $M$ (see  Proposition \ref{affine}). However, since the only complete intersections on $Q$ have bidegree $(a,a)$ for some $a>0$, whereas $\C$ has bidegree $(1,2)$ on $Q$,  no multiple of $\C$ can be a complete intersection on $Q$. 
Therefore the coregular sequence $\{ q\}$ of length one cannot be extended to a coregular sequence of length 2, even though ${\rm codepth} \, M=2$.  }
\end{example}

\section{Quasi-cyclic modules}\label{S3}
In  the previous section, we saw that a curve $\C \subset \mathbb P^3$ is a set theoretic complete intersection if and only if the associated local cohomology module $M=H^2_I(A)$ has codepth $2$. This condition is difficult to verify in practice, so in this section we introduce another property of the module $M$, which may be easier to test. 

\begin{definition}
An $A$-module $M$ is {\it quasi-cyclic} if it is a countable increasing union of cyclic submodules. In the graded case we assume that the cyclic submodules are generated by homogenous elements.
\end{definition}

The facts listed in the following proposition are worth noting but easy to prove. We leave  the proofs to the reader. 

\begin{proposition}\label{PROP} Let $M$ be an $A$-module. \begin{itemize}
\item[(i)] $M$ is quasi-cyclic if and only if it is a countable direct limit of cyclic $A$-modules.
\item[(ii)] $M$ is quasi-cyclic if and only if it is countably generated and every finite subset of $M$  is contained in a cyclic submodule of $M$
\item[(iii)] Any quotient of a quasi-cyclic module is quasi-cyclic.
\end{itemize}
\end{proposition}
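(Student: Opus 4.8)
The plan is to prove each of the three parts of Proposition~\ref{PROP} by unwinding the definition of quasi-cyclic directly, since all three are formal consequences of the structure of a countable increasing union of cyclic submodules.

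For part~(i), I would argue both directions explicitly. If $M=\bigcup_{n} C_n$ with $C_1\subseteq C_2\subseteq\cdots$ an increasing chain of cyclic submodules, then $M$ is the direct limit of this directed system $(C_n)$ with the inclusion maps as transition morphisms, which is a countable direct limit of cyclic modules. Conversely, if $M=\varinjlim_n C_n$ is a countable direct limit of cyclic modules along maps $\varphi_{n}\colon C_n\to C_{n+1}$, I would let $D_n$ be the image of $C_n$ in $M$ under the canonical map to the colimit; each $D_n$ is a cyclic submodule (a quotient of a cyclic module is cyclic), the $D_n$ form an increasing chain because the system is directed, and $M=\bigcup_n D_n$ since every element of a direct limit comes from some stage. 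Care is needed here to replace the abstract $C_n$ by their images $D_n$ inside $M$, since the direct limit maps need not be injective, but otherwise this is routine. In the graded setting I would note that the generators of the $D_n$ can be taken homogeneous.

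For part~(ii), the forward direction is easy: if $M=\bigcup_n C_n$ with the $C_n$ cyclic and increasing, then $M$ is generated by one generator from each $C_n$, hence countably generated, and any finite subset $\{m_1,\ldots,m_k\}$ lies in some $C_N$ because each $m_j\in C_{n_j}$ and the chain is increasing, so all $m_j$ lie in $C_N$ for $N=\max_j n_j$. For the converse I would enumerate a countable generating set $m_1,m_2,\ldots$ of $M$ and define $C_n$ to be a cyclic submodule containing $\{m_1,\ldots,m_n\}$, which exists by hypothesis; after replacing $C_n$ by a cyclic submodule containing $C_n\cup\{m_{n+1}\}$ if necessary to force the chain to increase, I would check that $\bigcup_n C_n=M$ because every generator $m_j$ lies in $C_j$. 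The only subtlety is arranging the chain $C_1\subseteq C_2\subseteq\cdots$ to be genuinely increasing while still cyclic at each stage; this is handled by choosing at step $n$ a single cyclic submodule containing the previous $C_{n-1}$ together with $m_n$, which the two-element/finite-subset hypothesis guarantees.

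For part~(iii), let $M=\bigcup_n C_n$ be quasi-cyclic and let $\pi\colon M\to N$ be a surjection onto a quotient $N=M/K$. Then $N=\pi(M)=\bigcup_n \pi(C_n)$, each $\pi(C_n)$ is a cyclic submodule of $N$ (the image of a cyclic module is cyclic), and the chain $\pi(C_1)\subseteq\pi(C_2)\subseteq\cdots$ is increasing because $\pi$ is monotone on submodules. Hence $N$ is a countable increasing union of cyclic submodules, so it is quasi-cyclic. I expect part~(i) to be the only place requiring genuine attention, since it is the one statement where one must pass between the intrinsic ``union of submodules'' description and the categorical colimit description and correctly account for the possible non-injectivity of the transition maps; parts~(ii) and~(iii) are direct bookkeeping.
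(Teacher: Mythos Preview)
Your proposal is correct. The paper in fact declines to prove this proposition at all, stating only that the facts ``are worth noting but easy to prove'' and leaving them to the reader; your argument supplies exactly the kind of routine verification the authors had in mind. The one small point you might make explicit in part~(i) is that an arbitrary countable directed index set admits a cofinal copy of $\mathbb{N}$, so that the reduction to a sequential system $\varphi_n\colon C_n\to C_{n+1}$ is justified; and in the graded case for part~(ii) one should observe that the generator of $C_{n-1}$ may be taken homogeneous, so that at each inductive step one is applying the finite-subset hypothesis to a finite set of homogeneous elements. Otherwise there is nothing to add.
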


\medskip 

\begin{examples}~\label{EX} \begin{enumerate}
\item A finitely generated module is quasi-cyclic if and only if it is cyclic. 
\item If $f \in A$, the localization $A_f$ is quasi-cyclic.
\item If $A$ is a local Gorenstein ring and $\sqrt{J}= \sqrt{(f_1, \ldots, f_r)} \, $  for $f_1, \ldots, f_r$ a regular sequence, then the local cohomology module $H^r_J(R)$ is quasi-cyclic. Indeed,  it is the direct limit of ${\rm Ext}^r (R/ J_n, R)$, where $J_n=(f_1^n, \ldots, f_r^n)$, and  the module ${\rm Ext}^r (R/ J_n, R)$ is a canonical module $\omega_{A/J_n}$, which is  cyclic since $A/ J_n$ is Gorenstein.  
\end{enumerate}
\end{examples}

\begin{remark}\label{REM}{\rm  If a sequence  $x,y$ of elements of $I$ is coregular for $M$, then the sequence $x^{a}, y^{b}$  is coregular for $M$ for arbitrary positive integers $a,b$. For the proof notice that $0:_M x^a$ has a filtration by submodules $0:_M x^i$, for $1\le i \le a-1$,  whose quotients are isomorphic to $0:_M x$.}
\end{remark}

\begin{theorem}\label{cd>2} Let $A$ be a ring, $I$ an ideal, and $M$ a non-zero $A$-module with  $\Supp(M)\subset V(I)$.
 If $M$ is countably generated and has  codepth  at least $2$, then $M$ is quasi-cyclic. \end{theorem}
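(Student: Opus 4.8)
The plan is to reduce the statement, via Proposition~\ref{PROP}(ii) together with the hypothesis that $M$ is countably generated, to the single assertion that \emph{any two elements of $M$ lie in a common cyclic submodule}. Indeed, once every pair lies in a cyclic submodule, an easy induction on cardinality shows every finite subset does: if $m_1,\dots,m_k\in Aw$ and $m_{k+1}\in M$, I would combine $w$ and $m_{k+1}$ into a single cyclic submodule $Aw'$; then $Aw\subseteq Aw'$, so $Aw'$ contains all of $m_1,\dots,m_{k+1}$. Thus the whole theorem comes down to the two-element case.

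So fix $m,m'\in M$. Since $M$ has codepth at least $2$, there is a coregular sequence $x_1,x_2\in I$ for $M$. By Lemma~\ref{ann}, and because $x_1,x_2\in I$ with $\Supp(M)\subset V(I)$, I may choose positive integers $a,b$ with $x_1^{a}m'=0$ and $x_2^{b}m=0$. By Remark~\ref{REM} the sequence $x_1^{a},x_2^{b}$ is again coregular for $M$, so Theorem~\ref{Kcoh} gives that the Koszul cohomology $H^{1}(x_1^{a},x_2^{b};M)$ vanishes. The crux is now to observe that the pair $(m,m')$ is a Koszul $1$-cocycle for $(x_1^{a},x_2^{b})$: the cocycle condition is exactly $x_1^{a}m'=x_2^{b}m$, and both sides vanish by the choice of $a,b$. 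Vanishing of $H^{1}$ forces this cocycle to be a coboundary, i.e.\ there exists $w\in M$ with $m=x_1^{a}w$ and $m'=x_2^{b}w$. Hence $m,m'\in Aw$, and the two elements lie in a common cyclic submodule.

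The idea I expect to carry the real weight is recognizing that a length-$2$ coregular sequence lets one glue two \emph{arbitrary} elements into one generator by realizing $(m,m')$ as a Koszul coboundary: the annihilation supplied by Lemma~\ref{ann} is precisely what makes $(m,m')$ a cocycle, and Remark~\ref{REM} is what allows raising $x_1,x_2$ to whatever powers the annihilation demands while preserving coregularity. With this in hand the non-graded case is immediate. The main obstacle I anticipate is the graded case, where the coboundary $w$ must be homogeneous; this forces the degree constraint $\deg m-a\deg x_1=\deg m'-b\deg x_2$, so one must choose the exponents $a,b$ (both large enough for the two annihilation conditions) so that this equality also holds. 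Exploiting the two free parameters $a,b$ one can arrange this whenever $\deg m-\deg m'$ is compatible with $\deg x_1$ and $\deg x_2$, and the careful bookkeeping needed to produce the homogeneous coboundary $w$ simultaneously with the required vanishing is the delicate part of the argument.
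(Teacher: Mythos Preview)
Your argument is correct and is essentially the paper's proof, repackaged through Theorem~\ref{Kcoh}. The paper also reduces to pairs and then constructs the common generator explicitly: choosing $i,j$ with $x^{i}m=0$ and $y^{j}n=0$, it uses coregularity of both orderings $(x^{i},y^{j})$ and $(y^{j},x^{i})$ (via Corollary~\ref{ORD} and Remark~\ref{REM}) to find lifts $m'\in 0:_{M}x^{i}$, $n'\in 0:_{M}y^{j}$ with $y^{j}m'=m$, $x^{i}n'=n$, and sets $\alpha=m'+n'$; this is exactly a by-hand verification that your cocycle is a coboundary, so your appeal to $H^{1}(x_{1}^{a},x_{2}^{b};M)=0$ is a mild streamlining (and lets you skip Corollary~\ref{ORD}). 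Regarding the graded subtlety you raise: the paper's proof does not address it either---its generator $\alpha$ need not be homogeneous unless the exponents are tuned---so this is not a gap in your proposal relative to the paper.
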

\begin{proof} By Proposition~\ref{PROP}(ii), it will be sufficient to show that any two elements $m,n \in M$ are contained in a cyclic submodule.  
Let $x,y\in I$ be a coregular sequence for $M$. By Corollary~\ref{ORD}, then $y,x$ is also a coregular sequence. By Lemma~\ref{ann} every element of $M$ is annihilated by some power of $x$ and some power of $y$. Let $N_i=0:_M x^i$ and let  $L_j=0:_M y^j$. Then $M=\cup_{i \ge 1}  N_i$ and $M=\cup_{j \ge 1}  L_j$. Thus we may assume that $m\in N_i$ and $n\in L_j$. By Remark~\ref{REM}, $x^i,y^j$ and $y^j,x^i$ are also coregular sequences for $M$. In particular, $y^j$ acts surjectively on $N_i$ and $x^i$ acts surjectively on $L_j$. In other words, there are elements $m'\in N_i$ and $n'\in L_j$ such that $y^jm'=m$ and $x^in'=n$. Now let $\alpha=m'+n'\in M$. Then 
\begin{eqnarray*} x^i \alpha=x^im'+x^in'=0+n=n\\
y^j \alpha=y^jm'+y^jn'=m+0=m\, .
\end{eqnarray*}
Hence $m$ and $n$ are contained in the cyclic submodule generated by $\alpha$. Thus $M$ is quasi-cyclic. 
\end{proof}

\begin{corollary} If a variety $V\subset  \mathbb P^n$ is a set-theoretic complete intersection of codimension $r$, then $H^r_I(A)$ is a graded quasi-cyclic $A$-module. 
\end{corollary}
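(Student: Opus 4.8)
The plan is to deduce this corollary directly from Theorem~\ref{cd>2} together with Hellus's theorem (Theorem~\ref{Hellus}) and the special structure of local cohomology over the homogeneous coordinate ring of $\mathbb P^n$. Since $V$ is a set-theoretic complete intersection of codimension $r$, there is a regular sequence $x_1,\ldots,x_r$ in $A$ with $\sqrt I=\sqrt{(x_1,\ldots,x_r)}$, where $I$ is the homogeneous ideal of $V$. By the implication $(i)\Rightarrow(ii)$ of Theorem~\ref{Hellus}, the module $M=H^r_I(A)$ then has $H^i_I(A)=0$ for $i>r$ and admits $x_1,\ldots,x_r$ as a coregular sequence, so in particular $\operatorname{codepth} M\ge r\ge 2$ (here $r\ge 2$ because a codimension-zero or codimension-one variety is trivially a complete intersection, and the content of the statement concerns $r\ge 2$; for $r=1$ a principal radical ideal already makes $A/I$ cyclic-related matters transparent).

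Next I would verify the one genuine hypothesis of Theorem~\ref{cd>2} that does not come for free, namely that $M=H^r_I(A)$ is \emph{countably generated}. The cleanest route is the \v{C}ech description of local cohomology: over the standard graded ring $A=k[x_0,\ldots,x_n]$, the module $H^r_I(A)$ can be computed as a subquotient of the \v{C}ech complex on the generators of $I$ (or on $x_1,\ldots,x_r$, since local cohomology depends only on $\sqrt I$). Each \v{C}ech term is a finite direct sum of localizations $A_{f_{i_1}\cdots f_{i_s}}$, and each such localization is countably generated as an $A$-module; a subquotient of a countably generated module over a Noetherian ring is again countably generated. Alternatively, one may invoke the direct-limit presentation $H^r_I(A)=\varinjlim \operatorname{Ext}^r_A(A/J_n,A)$ with $J_n=(x_1^n,\ldots,x_r^n)$ (as in Examples~\ref{EX}(3)), a countable direct limit of finitely generated modules, which is manifestly countably generated. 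I would record this as a short lemma-free remark.

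With both hypotheses of Theorem~\ref{cd>2} in hand --- $M$ nonzero (it is the unique nonvanishing local cohomology of a codimension-$r$ subvariety), $\operatorname{Supp}(M)\subset V(I)$ (immediate for $I$-local cohomology), countably generated, and of codepth at least $2$ --- the theorem yields that $M$ is quasi-cyclic. It remains to observe that the construction respects the grading: the regular sequence $x_1,\ldots,x_r$ may be taken homogeneous, the coregular-sequence machinery and the argument of Theorem~\ref{cd>2} produce a homogeneous generator $\alpha$ for the cyclic submodule containing any two given homogeneous elements, so $M$ is quasi-cyclic in the graded sense of the definition.

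I expect the only real obstacle to be the countable-generation check, since Theorem~\ref{cd>2} explicitly requires it and it is not part of the definition of a set-theoretic complete intersection; everything else is a direct citation. The grading bookkeeping is routine once one notes that in the graded setting all the relevant choices (the defining equations, the \v{C}ech complex, the element $\alpha$) are homogeneous.
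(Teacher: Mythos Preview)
Your approach for $r\ge 2$ is exactly the paper's: invoke Theorem~\ref{Hellus} to obtain a coregular sequence of length $r\ge 2$ on $M=H^r_I(A)$, then apply Theorem~\ref{cd>2}. You are in fact more careful than the paper in checking the countable-generation hypothesis of Theorem~\ref{cd>2}; the paper takes this as evident (and your direct-limit argument via $\varinjlim \Ext^r_A(A/J_n,A)$ is the cleanest way to see it).

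The one genuine gap is your treatment of $r=1$. You dismiss it with the remark that ``a principal radical ideal already makes $A/I$ cyclic-related matters transparent,'' but this does not address the claim: the module in question is $H^1_I(A)$, not $A/I$, and Theorem~\ref{cd>2} requires codepth at least $2$, so it simply does not apply when $r=1$. The paper handles this case separately and directly: if $f$ is the defining equation of the hypersurface $V$, then $H^1_I(A)=A_f/A$, which is quasi-cyclic by Example~\ref{EX}(2) (together with Proposition~\ref{PROP}(iii), or just by noting that $A_f/A$ is the increasing union of the cyclic submodules generated by $1/f^n$). You should supply this short argument rather than waving the case away.
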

\begin{proof} Write $M=H^r_I(A)$. If $r=1$ and $f$ is the defining equation of $V$, then $M=A_f/A$, which is quasi-cyclic by Example~\ref{EX}(2). If $r\ge 2$, then $M$ has codepth $r\ge 2$ by Theorem~\ref{Hellus}, and hence is quasi-cyclic by Theorem~\ref{cd>2}. 
\end{proof}

\begin{example}\label{skew} Let $\C$ be the disjoint union of two lines $L_1$ and $L_2$ in $\mathbb P^3$. Then $H^2_I(A)$, where $I$  is the defining  ideal of $\C$, is not quasi-cyclic. In particular, this gives a new proof of the well-known result that $\C$ is not a set-theoretic complete intersection. 
\end{example}
\begin{proof} By direct computation. If we define the lines by $x=y=0$ and $z=w=0$, then  the Mayer-Vietoris sequence for local cohomology
\[ 0=H^2_{(x, y, z, w)}(A) \lto H^2_{(x,y)}(A) \oplus H^2_{(z,w)} (A)  \lto H^2_I(A) \lto H^3_{(x, y, z, w)}(A)=0
\]
shows that  $ H^2_I(A)\cong H^2_{(x,y)}(A) \oplus H^2_{(z,w)} (A) $. Let
$M_1=H^2_{(x,y}(A)$ and $M_2=H^2_{(z,w)} (A)$ and notice that they are Macaulay inverse systems of $x,y$ (of $z,w$ respectively) on $A$. Thus they are generated as $k[z, w]$-module (as $k[x, y]$-module, respectively) by $\{x^iy^j \ | \ i<0 \ \mbox{and} \ j<0\}$ (respectively, by $\{z^iw^j \ | \ i<0 \ \mbox{and} \ j<0\}$). 

An arbitrary element of $M$ can be written as $$\alpha=\frac{a}{x^iy^j}+\frac{b}{z^{\ell}w^m}$$
for positive integers $i,j, \ell, m$ and $a,b\in A$. 

We now prove that the the socle elements of $M$  
$$\frac{1}{xy} \qquad \mbox{\rm and} \qquad \frac{1}{zw}$$
are not contained in any cyclic submodule of $M$, that is they cannot be contained in the submodule generated by any $\alpha$. 

Indeed, suppose  $f\alpha=\frac{1}{xy} $ and $g\alpha= \frac{1}{zw}$ for some $f, g\in A$. The equality $f\alpha=\frac{1}{xy} $ forces $b=0$, since otherwise $f\in (z,w)$ in which case the first term would have $z$ and $w$. But if $b=0$ the equality $g\alpha= \frac{1}{zw}$ cannot be satisfied. 
\end{proof}

\begin{remark}[Segre embedding] If $V$ is the Segre embedding of $\mathbb P^1 \times \mathbb P^2$ into $\mathbb P^5$, then Claudiu Raicu has shown us an argument, using representation theory in characteristic zero, that $H^2_I(A)$ is not quasi-cyclic. The point is that $H^2_I(A)$, with its grading, is a direct sum of irreducible representations and Pieri's rules shows that the `bottom' piece cannot be reached by multiplication from higher pieces. This gives another proof in characteristic zero that $V$ is not a set-theoretic complete intersection, which was already known, since $H^3_I(A)$ is not zero. 
\end{remark}

\section{Curves on surfaces}\label{S4} 

In this section we consider the following Setting~\ref{ass} to relate properties of $X$ and $X\setminus \C$ to the behavior of multiplication by $h$ on $M$ and the module $M/hM$. In particular we examine when $X\setminus \C$ is affine or a modification of an affine. 

\begin{setting}\label{ass}{ \rm Let  $\C$ be a connected curve lying on a surface $X$ in $\mathbb P^3$. Let $A$ be the homogeneous coordinate ring of $\mathbb P^3$, let $I$ be the ideal of $\C$, and let $h \in I$ be the element defining the surface $X$. Write $M$ for the module $H^2_I(A)$. }
\end{setting}

\begin{proposition}\label{affine} In Setting~\ref{ass}, $h$ is a coregular element for $M$ if and only if $X\setminus \C$ is affine. 
\end{proposition}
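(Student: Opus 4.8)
The plan is to reduce ``$h$ coregular'' to a local cohomology vanishing and then translate that vanishing into affineness of $U:=X\setminus \C$. Recall that $h$ is coregular for $M$ exactly when multiplication by $h$ is surjective on $M$, i.e.\ when $M/hM=0$. Since $A$ is a polynomial ring and $h\neq 0$, $h$ is a nonzerodivisor, so from $0\lto A\overset{h}\lto A\lto B\lto 0$ with $B:=A/hA$ (the homogeneous coordinate ring of $X$) I would run the long exact sequence of local cohomology with support in $I$. Because $A$ is Cohen--Macaulay and $\htt I=2$, we have $H^0_I(A)=H^1_I(A)=0$, and because $\C$ is connected, Hartshorne's connectedness theorem gives $H^3_I(A)=0$. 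The sequence then collapses to $0\lto H^1_I(B)\lto M\overset{h}\lto M\lto H^2_I(B)\lto 0$, identifying $M/hM\cong H^2_I(B)=H^2_{IB}(B)$, the local cohomology of the surface's coordinate ring with support in the ideal $IB$ of $\C$ on $X$.

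Next I would compute $H^\bullet_{IB}(B)$ geometrically. Writing $T=\Spec B$ for the affine cone over $X$ with vertex $p$, the set $V(IB)$ is the cone over $\C$ and contains $p$, so its complement $W:=T\setminus V(IB)$ lies inside the punctured cone and is exactly the $\mathbb G_m$-bundle $\pi\colon W\to U$ over $U=X\setminus \C$. Since $T$ is affine, the local cohomology sequence of the pair gives $H^i(W,\mathcal O_W)\cong H^{i+1}_{IB}(B)$ for $i\ge 1$, and since $\pi$ is affine with $\pi_*\mathcal O_W=\bigoplus_n \mathcal O_U(n)$, we obtain $H^{i+1}_{IB}(B)\cong \bigoplus_n H^i(U,\mathcal O_U(n))$ for $i\ge 1$. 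In particular $M/hM\cong H^2_{IB}(B)\cong\bigoplus_n H^1(U,\mathcal O_U(n))$.

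It remains to show that the vanishing of $\bigoplus_n H^1(U,\mathcal O_U(n))$ is equivalent to $U$ being affine. If $U$ is affine, Serre's criterion makes all these groups vanish, so $M/hM=0$ and $h$ is coregular. For the converse I would use Serre's criterion in the form: the surface $U$ is affine iff $H^1(U,\mathcal F)=H^2(U,\mathcal F)=0$ for every coherent $\mathcal F$. Here $U$ is a nonempty dense open subset of the connected projective surface $X$, hence not proper, so by Lichtenbaum's vanishing theorem $H^2(U,\mathcal F)=0$ for all coherent $\mathcal F$; equivalently $H^3_{IB}(B)\cong\bigoplus_n H^2(U,\mathcal O_U(n))=0$ automatically. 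Granting $\bigoplus_n H^1(U,\mathcal O_U(n))=0$, I would promote it to $H^1(U,\mathcal F)=0$ for arbitrary coherent $\mathcal F$: since $\mathcal O_U(1)$ is ample on $U$ (restriction of an ample bundle to a locally closed subscheme), there is a surjection $\mathcal E=\bigoplus_{\mathrm{fin}}\mathcal O_U(-m)\lto \mathcal F$ with coherent kernel $\mathcal K$, and the exact sequence $H^1(U,\mathcal E)\lto H^1(U,\mathcal F)\lto H^2(U,\mathcal K)$ forces $H^1(U,\mathcal F)=0$. Hence $U$ is affine.

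The two delicate points are the converse affineness step and the role of connectedness. Connectedness is used exactly once but essentially: it kills $H^3_I(A)$ and upgrades the injection $M/hM\hookrightarrow H^2_{IB}(B)$ to an isomorphism; the forward implication (affine $\Rightarrow$ coregular) needs only the injection and so does not require it. I expect the main obstacle to be the converse affineness criterion, namely passing from vanishing on the twists $\mathcal O_U(n)$ to vanishing on all coherent sheaves, which is precisely where Lichtenbaum's theorem (to dispose of $H^2$) and ampleness of $\mathcal O_U(1)$ on $U$ are needed. The cone/Proj comparison and the base-independence $H^i_I(B)=H^i_{IB}(B)$ of local cohomology are routine and I would only check them briefly.
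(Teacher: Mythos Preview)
Your proposal is correct and follows essentially the same route as the paper: identify $M/hM$ with $H^2_{IB}(B)$ via the long exact sequence (using $H^3_I(A)=0$ from connectedness), translate this to $\bigoplus_n H^1(U,\mathcal O_U(n))$, and then apply Serre's criterion together with a d\'evissage. The only cosmetic differences are that the paper cites the isomorphism $H^2_J(B)\cong\bigoplus_n H^1(X\setminus\C,\mathcal O_X(n))$ from \cite{CDAV} rather than deriving it via the cone, and obtains the vanishing of $H^2(U,\mathcal O_U(n))$ from $H^3_J(B)=0$ (itself a consequence of $H^3_I(A)=0$) rather than invoking Lichtenbaum directly; your version is in fact more explicit on the d\'evissage step, which the paper leaves as a one-line assertion.
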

\begin{proof} Let $B=A/(h)$  be the coordinate ring of $X$ and let $J=IB$ be the ideal of $\C$ in $X$. Note that $M/hM\cong H^2_I(A/(h))$ since $H^3_I(A)=0$ according to \cite[7.5]{CDAV}. Hence $h$ is coregular for $M$ if and only if $H^2_I(A/(h))=H^2_I(B)=0$. We make use of the well-known result (see \cite[Section 2, page 412]{CDAV}) that 
\[H^2_J(B)\cong \oplus_{n \in \mathbb Z}H^1(X \setminus \mathcal C, \mathcal O_X(n))\, .\]
The latter is zero if and only if $H^1(X 
\setminus \mathcal C, \mathcal O_X(n))=0$ for all $n \in \mathbb Z$. Since $H^i_J(B)=0$ for all $i \ge 3$, it follows that also $H^i(X \setminus \mathcal C, \mathcal O_X(n))=0$  for all $i\ge 2$, and so $H^1(X \setminus \mathcal C, \mathcal F))=0$ for all coherent sheaves on $X \setminus \mathcal C$. By Serre's criterion, this implies that $X \setminus \mathcal C$ is affine. Conversely, if $X \setminus \mathcal C$ is affine all higher cohomology of coherent sheaves on $X \setminus \mathcal C$ vanish, so $h$ is coregular. 
\end{proof}

\begin{example}\label{e5.3}
\begin{enumerate}[(a)]
\item Let $\C$ be a curve of bidegree $(a,b)$ on a nonsingular quadric surface $Q$ with $a,b>0$. Then $\C$ is connected and the  defining equation $q$ of  $Q$ is coregular for $M$. Indeed, $\C$ is ample on $Q$ so $Q\setminus \C$ is affine. 
\item Let $\C$ be a curve on a nonsingular cubic surface $X$ with divisor class $(a;b_1, \ldots, b_6)$ in the usual notation, with $a\ge b_1+b_2+b_3$ and $b_1\ge b_2\ge \ldots \ge b_6>0$. Then $\C$ is connected and is ample on $X$ (see \cite[V, 4.12]{AG}). Hence $X\setminus \C$ is affine, and the equation defining $X$ is coregular for $M$. 
\item Let $\C$ be {\bf the rational quartic curve} given by the parametric representation $$(x,y,z,w)=(t^4,t^3u,tu^3,u^4)\, .$$ This curve $\C$ lies on the nonsingular quadric surface defined by $q=xw-yz$ and it has bidegree $(1,3)$ hence by (a), $q$ is a coregular element for $M$. The curve $\C$ also lies on the cone $X$ over a cuspidal plane cubic curve defined by $g=y^3-x^2z$.  We will show that $X\setminus \C$ is affine, so that $g$ is another coregular element for $M$ (the same applies to the element $h=z^3-yw^2$). However, $g,q$ is not a coregular sequence, by Hellus's theorem, since the ideal $(g,q)$ defines a curve of degree 6 consisting of $\C$ and a double line. 

To show that $X\setminus \C$ is affine, we consider the normalization $\tilde{X}$ of $X$, which is the cone over a twisted cubic curve in $\mathbb P^3$ \cite[6.9]{HP}. The inverse image of $\C$ in $\tilde{X}$ is a curve $\tilde{\C}$, isomorphic to $\C$, which meets every ruling of the cone $\tilde{X}$ in one point. Therefore $3\tilde{\C}$ is a Cartier divisor on $\tilde{X}$, and since ${\rm Pic} \, \tilde{X}\cong \mathbb Z$, this Cartier divisor is ample hence $\tilde{X}\setminus \tilde{\C}$ is affine. But $\tilde{X}\setminus \tilde{\C}$ is the normalization of $X\setminus \C$, thus $X\setminus \C$ is affine as well. 
\end{enumerate} 
\end{example}

\begin{definition}\label{modification} A scheme $V$ of finite type over $k$ is a {\it modification of an affine} if there exists a proper surjective map $\pi: V  \lto V_0$ with  $V_0$ affine, $\pi_* \mathcal O_V=\mathcal O_{V_0}$, and such that $\pi$ has only finitely many {\it fundamental points}, that is points $P\in V_0$ for which ${\rm dim}\, \pi^{-1}(P)\ge 1$. 
\end{definition}

\begin{proposition}\label{4.4} In Setting~\ref{ass} the following conditions are equivalent:
\begin{enumerate} 
\item $X\setminus \C$ is a modification of an affine scheme;
\item ${\rm dim}\, H^i(X\setminus \C, \mathcal F) < \infty\ $ for every coherent sheaf $\mathcal F$ on $X\setminus \C$ and every $i>0$;
\item each graded component of $M/hM=H^2_I(A/hA)$ is finite dimensional over $k$. 
\end{enumerate}
\end{proposition}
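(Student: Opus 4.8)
The plan is to prove the chain $(1)\Leftrightarrow(2)\Leftrightarrow(3)$, the cohomological core being the translation already supplied by Proposition~\ref{affine}. Write $U=X\setminus\C$, $B=A/hA$ and $J=IB$. As in the proof of Proposition~\ref{affine} we have $M/hM\cong H^2_I(B)\cong H^2_J(B)\cong\bigoplus_{n\in\mathbb Z}H^1(U,\mathcal O_X(n))$, and moreover $H^i(U,\mathcal O_X(n))=0$ for all $i\ge 2$ and all $n$, since $H^i_J(B)=0$ for $i\ge 3$. Under this graded identification, condition $(3)$ says precisely that $H^1(U,\mathcal O_X(n))$ is finite-dimensional over $k$ for every $n$, so $(3)$ is a statement about the line bundles $\mathcal O_X(n)|_U$ alone, whereas $(2)$ is the same finiteness for every coherent sheaf on $U$.

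The equivalence $(2)\Leftrightarrow(3)$ is then the easy bookkeeping. The implication $(2)\Rightarrow(3)$ is immediate, each $\mathcal O_X(n)|_U$ being coherent. For $(3)\Rightarrow(2)$ I would bootstrap from line bundles to arbitrary coherent sheaves, using that $U$ has dimension $2$. Given a coherent sheaf $\mathcal F$ on $U$, extend it to a coherent sheaf on $X$ and apply Serre's theorem to produce a surjection $\mathcal O_X(-n)^{\oplus N}|_U\twoheadrightarrow\mathcal F$ with kernel $\mathcal K$. Grothendieck vanishing gives $H^i(U,-)=0$ for $i\ge 3$, so the long exact sequence together with $H^2(U,\mathcal O_X(-n))=0$ forces $H^2(U,\mathcal F)=0$; feeding this back in (now $H^2(U,\mathcal K)=0$), the long exact sequence exhibits $H^1(U,\mathcal F)$ as a quotient of the finite-dimensional space $H^1(U,\mathcal O_X(-n))^{\oplus N}$, hence finite-dimensional. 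This yields $(2)$.

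For $(1)\Rightarrow(2)$ I would run the Leray spectral sequence of the structure map $\pi\colon U\to U_0$ of a modification. Since $U_0$ is affine, $H^p(U_0,-)$ vanishes for $p>0$ on quasi-coherent sheaves, so Leray collapses to $H^i(U,\mathcal F)\cong\Gamma(U_0,R^i\pi_*\mathcal F)$. As $\pi$ is proper the sheaves $R^i\pi_*\mathcal F$ are coherent, and for $i>0$ they vanish wherever the fibres of $\pi$ are finite, hence are supported on the finite set of fundamental points. A coherent sheaf with finite support has finite length and therefore a finite-dimensional space of global sections, so $H^i(U,\mathcal F)$ is finite-dimensional for every $i>0$.

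The implication $(2)\Rightarrow(1)$ is the heart of the matter and the step I expect to be the main obstacle. The natural candidate is the affinization $U_0=\Spec\Gamma(U,\mathcal O_U)$ with its canonical morphism $\pi\colon U\to U_0$, for which $\pi_*\mathcal O_U=\mathcal O_{U_0}$ holds essentially by construction. The difficulty lies entirely in showing that the finiteness hypothesis $(2)$ forces $R=\Gamma(U,\mathcal O_U)$ to be a finitely generated $k$-algebra, so that $U_0$ is of finite type, and that $\pi$ is proper with only finitely many fundamental points; a priori the section ring $\bigcup_n H^0(X,\mathcal O_X(nD))$ of a divisor $D$ supported on $\C$ need not even be Noetherian. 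This is exactly the content of the Goodman--Hartshorne finiteness theorem for schemes with finite-dimensional cohomology groups, which I would invoke (or reprove in the surface case): the finiteness of $H^1$ bounds the failure of $R$ to be finitely generated, and $\pi$ contracts $\C$ precisely onto the finitely many points over which positive-dimensional fibres survive, these being the fundamental points. I expect all the genuine work to be concentrated in this last implication, the first two equivalences being formal.
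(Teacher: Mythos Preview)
Your proposal is correct and follows essentially the same route as the paper: the equivalence $(1)\Leftrightarrow(2)$ is attributed to Goodman--Hartshorne, and $(2)\Leftrightarrow(3)$ is handled via the graded identification $M/hM\cong\bigoplus_n H^1(U,\mathcal O_X(n))$ together with a d\'evissage. The only difference is expository: the paper simply cites \cite{GH} for $(1)\Leftrightarrow(2)$ and writes ``the usual d\'evissage'' for $(3)\Rightarrow(2)$, whereas you have written out the Leray argument for $(1)\Rightarrow(2)$ and the bootstrap from line bundles for $(3)\Rightarrow(2)$ explicitly, while correctly isolating $(2)\Rightarrow(1)$ as the substantive input from Goodman--Hartshorne.
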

\demo
The equivalence of (1) and (2) follows from \cite[Theorem 1 and Corollary 3]{GH}. 

We now show (2) implies (3). As in the proof of Proposition~\ref{affine}, we have
$$M/hM\cong \oplus_{n \in \mathbb Z}H^1(X \setminus \mathcal C, \mathcal O_X(n))$$ where $n$ indicates the grading. Thus by (2), each graded piece is finite-dimensional. 

Finally we show that (3) implies (2). Again, as in the proof of Proposition~\ref{affine}, $H^i(X \setminus \mathcal C, \mathcal O_X(n))=0$ for all $n\in \mathbb Z$ and all $i\ge2$. Thus by the usual d\'evissage, $H^i(X \setminus \mathcal C, \mathcal F)=0$ for $i\ge 2$ and finite dimensional for $i=1$, for all coherent sheaves $\mathcal F$ on $X$. 
\QED

\begin{corollary} In Setting~\ref{ass} if $\C$ satisfies the equivalent conditions of Proposition~\ref{4.4}, then the degree $n$ component of $M/hM$ is zero for $n\gg 0$. 
\end{corollary}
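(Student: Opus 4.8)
The plan is to use the graded isomorphism
$(M/hM)_n \cong H^1(X\setminus \C, \mathcal O_X(n))$ established in the proof of Proposition~\ref{affine} (and used again in Proposition~\ref{4.4}), and to show that this group vanishes for $n\gg 0$ by exploiting the proper contraction furnished by condition~(1) of Proposition~\ref{4.4}. Setting $U=X\setminus \C$, I would invoke condition~(1) to obtain a proper surjective morphism $\pi\colon U\lto U_0$ onto an affine scheme $U_0$ of finite type over $k$. The crucial observation is that the restriction $\mathcal O_X(1)|_U$ of the very ample sheaf of $\mathbb P^3$ is relatively ample for $\pi$.

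To see this I would argue fiberwise: for a proper morphism over a Noetherian base, a line bundle is $\pi$-ample precisely when its restriction to each fiber is ample. Every fiber $\pi^{-1}(P)$ is proper over the residue field, hence closed in the separated scheme $X$, and $\mathcal O_X(1)$ is ample on $X$; therefore its restriction to each fiber is ample. Thus $\mathcal O_X(1)|_U$ is $\pi$-ample, and in particular $\pi$ is a \emph{projective} morphism. I would then apply relative Serre vanishing to $\pi$ and the $\pi$-ample sheaf $\mathcal O_X(1)|_U$ to produce an integer $n_0$ with $R^1\pi_*\big(\mathcal O_X(n)|_U\big)=0$ for all $n\ge n_0$. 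Since $U_0$ is affine, the Leray spectral sequence for $\pi$ collapses to isomorphisms $H^1(U,\mathcal O_X(n))\cong H^0\big(U_0, R^1\pi_*(\mathcal O_X(n)|_U)\big)$, which vanish for $n\ge n_0$. Combined with the graded isomorphism above, this yields $(M/hM)_n=0$ for $n\gg 0$.

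The main obstacle, and the step deserving the most care, is verifying that $\mathcal O_X(1)|_U$ is genuinely relatively ample, i.e. upgrading $\pi$ from proper to projective so that relative Serre vanishing is available; once that is in place, the affineness of $U_0$ and the Leray argument are routine. It is worth noting that the finiteness hypothesis in Proposition~\ref{4.4}(3) is subsumed by this argument: what actually forces the vanishing is the existence of the proper map $\pi$ to an affine together with the ampleness of $\mathcal O_X(1)$ on the contracted (proper) fibers, so that the degree of $\mathcal O_X(n)$ on each such fiber grows without bound and kills the higher direct image.
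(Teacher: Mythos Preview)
Your argument is correct. The paper, by contrast, gives no argument at all: its proof of this corollary is the single line ``This follows from \cite[Corollary~4]{GH}.'' What you have written is essentially a self-contained reconstruction of the relevant content of that Goodman--Hartshorne result, specialized to the present situation. The key ingredients---that the fibers of $\pi$, being proper over the residue field, embed as closed subschemes of the projective surface $X$ (a proper locally closed immersion is closed), that $\mathcal O_X(1)$ therefore restricts amply to each fiber so that $\mathcal O_X(1)|_U$ is $\pi$-ample by the fiberwise criterion, and that relative Serre vanishing together with the Leray spectral sequence over the affine base $U_0$ then kills $H^1$ in high degree---are all sound. Your approach has the advantage of being explicit and of fitting seamlessly with the Leray/$R^1\pi_*$ framework that the paper itself deploys in the proof of the subsequent theorem; the paper's approach has the advantage of brevity, at the cost of sending the reader to an external reference.
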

\demo This follows from \cite[Corollary 4]{GH}. 
\QED

\begin{example} \label{e5.7}
\begin{enumerate}[(a)]
\item If $\C$ is a curve on the nonsingular cubic surface $X$ having divisor class $(a;b_1, \ldots, b_6)$ in the usual notation, with $a\ge b_1+b_2+b_3$ and $b_1\ge b_2\ge\ldots \ge b_6\ge 0$ and $b_3>0$, then $X\setminus \mathcal C$ is a modification of an affine scheme. Indeed, in the trivial case $b_6>0$, we have seen already that $X\setminus \C$ is affine (see Example~\ref{e5.3}(b)), so there is nothing to prove. If $r$ is the largest index for which $b_r>0$, with $r=3,4,5$, then $\C$ is on a surface $X_0$ obtained by blowing up $r$ points in $\mathbb P^2$, and $\C$ will be ample there, so that $X_0\setminus \C$ is affine. Then the projection $\pi: X\setminus \C \lto X_0 \setminus \C$ makes $X\setminus \C$ a modification of an affine.  
\item If $\C$ is the rational quartic curve of Example~\ref{e5.3}(c), then $\C$ lies on a ruled cubic surface $X$ defined by $p=xz^2-y^2w$ (see \cite[\S 6]{GD} and \cite[6.3 and 7.12]{HP}). Since $\C$ meets the double line of $X$ only at the pinch points, its inverse image in the normalization $S$ of $X$  is a curve $\C'$ corresponding to a conic $\C_0$ in $\mathbb P^2$ that does not meet the point $P$ of $\mathbb P^2$ that was blown up. Thus $S\setminus \C'$ is a modification of the affine scheme $\mathbb P^2 \setminus \C_0$. Now it follows that $X\setminus \C$ is a modification of an affine since $\pi: S\setminus \C' \lto X\setminus \C$ is a finite morphism. Indeed, by applying the criterion of Proposition~\ref{4.4}(2) and  the Leray spectral sequence of cohomology for a finite surjective morphism $\pi: V'\lto V$, we obtain that if $V'$ is a modification of an affine if and only if $V$ is. 
\item More generally, if $\C$ is a curve on a nonsingular surface $X$ with $\C^2>0$, then $X\setminus \C$ is a modification of an affine. Inded, this follows from \cite[\S III 4.1]{ASAV} which implies that $H^i(X\setminus \C, \mathcal F) $ is finite-dimensional for every coherent sheaf $\mathcal F$ on $X\setminus \C$ and every $i>0$.
\end{enumerate}
\end{example}

\begin{theorem} In Setting~\ref{ass} if $X\setminus \C$ is a modification of an affine, then the $I$-codepth of $M/hM$ is at least two. 
\end{theorem}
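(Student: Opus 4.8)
The plan is to produce an honest coregular sequence $x_1,x_2\in I$ for $N:=M/hM$; by Theorem~\ref{Kcoh} this amounts to finding $x_1,x_2\in I$ with $H^i(x_1,x_2;N)=0$ for $i=1,2$. Write $U:=X\setminus\C$. As in the proof of Proposition~\ref{affine} we have $N\cong\bigoplus_{n\in\mathbb Z}H^1(U,\mathcal O_X(n))$, together with $H^i(U,\mathcal O_X(n))=0$ for all $i\ge 2$ and all $n$; set $R:=\bigoplus_{n}H^0(U,\mathcal O_X(n))$. The idea is to choose $x_1,x_2\in I$ so that their common zero locus on $U$ is $0$-dimensional and then read off the two Koszul cohomology groups from the geometry of that locus.

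First I would record a cohomological computation valid for \emph{any} homogeneous $x_1,x_2\in I$, of degrees $d_1,d_2$, whose common zero scheme $Z:=V(x_1,x_2)\cap U$ is $0$-dimensional. Since $X$ is a hypersurface in $\mathbb P^3$ it is Cohen--Macaulay, hence so is $U$, and at each point of $Z$ the two elements form a regular sequence; thus the Koszul complex of sheaves
\[
0\to\mathcal O_U(-d_1-d_2)\to\mathcal O_U(-d_1)\oplus\mathcal O_U(-d_2)\to\mathcal O_U\to\mathcal O_Z\to 0
\]
is exact. Twisting by $\mathcal O_X(n)$, summing over $n$, and feeding this into the hypercohomology spectral sequence for $H^\bullet(U,-)$, whose $E_2$ term is the Koszul cohomology $H^p(x_1,x_2;\mathcal H^q)$ of the only two nonzero rows $\mathcal H^0=R$ and $\mathcal H^1=N$ (because $H^{\ge 2}(U,-)=0$), one obtains, since $Z$ is affine so that the hypercohomology is concentrated in a single degree, that $H^2(x_1,x_2;N)=0$ and that $H^1(x_1,x_2;N)$ is the cokernel of the restriction map $R\to\bigoplus_nH^0(Z,\mathcal O_Z(n))$. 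So it remains to arrange that the sections on $U$ surject onto the sections on $Z$.

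This is where the hypothesis enters. Let $\pi\colon U\to U_0$ exhibit $U$ as a modification of the affine scheme $U_0$, and let $E\subset U$ be the (finite) union of the positive-dimensional fibers of $\pi$, so that $\pi$ is an isomorphism over $U_0\setminus\pi(E)$ and $E\cap\C=\varnothing$. Because the finitely many components of $E$ are curves not contained in $\C$, a general element $x_1\in I$ of large degree does not vanish on any of them, so $V(x_1)\cap E$ is finite; a general $x_2\in I$ of large degree then avoids these finitely many points and makes $Z=V(x_1,x_2)\cap U$ zero-dimensional, so that $Z\cap E=\varnothing$. For such a choice $\pi$ is an isomorphism near $Z$, hence $Z$ maps isomorphically onto a finite closed subscheme $Z_0$ of the affine scheme $U_0$. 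Since $\pi_*\mathcal O_U=\mathcal O_{U_0}$, for every $n$ we have $H^0(U,\mathcal O_X(n))=H^0(U_0,\pi_*\mathcal O_X(n))$, and restriction of sections from an affine scheme to a closed subscheme is surjective; hence $R\to\bigoplus_nH^0(Z,\mathcal O_Z(n))$ is surjective and $H^1(x_1,x_2;N)=0$. Together with $H^2(x_1,x_2;N)=0$ this shows, via Theorem~\ref{Kcoh}, that $x_1,x_2$ is a coregular sequence for $N$, so the $I$-codepth of $M/hM$ is at least two.

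The routine points---existence of the good pair $x_1,x_2$ by prime avoidance in high degree, and Cohen--Macaulayness giving the regular-sequence/Koszul exactness---I would dispatch quickly. The main obstacle is the middle step: correctly setting up the hypercohomology spectral sequence so as to identify $H^1(x_1,x_2;N)$ with the cokernel of $R\to\bigoplus_nH^0(Z,\mathcal O_Z(n))$, and then using precisely the affineness of the base $U_0$ of the modification (together with $Z\cap E=\varnothing$) to force that cokernel to vanish. If one prefers to avoid the spectral sequence, the same conclusion can be reached by splitting the four-term Koszul complex of sheaves into two short exact sequences and chasing the resulting long exact cohomology sequences, using $H^{\ge 2}(U,-)=0$ throughout.
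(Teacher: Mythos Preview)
Your argument is correct and takes a genuinely different route from the paper's. The paper proceeds by pushing the problem down to the affine base $U_0$ via the Leray spectral sequence, identifying $(M/hM)_n$ with the stalks of $R^1\pi_*\mathcal O(n)$ at the fundamental points, and then invoking the theorem on formal functions to write these as inverse limits of $H^1$ of the infinitesimal thickenings $E_\nu$ of the exceptional curves. It then chooses $f,g\in I$ meeting each $E_\nu$ in distinct non-embedded points (which requires Brodmann's theorem on asymptotic stability of associated primes) and checks surjectivity of multiplication by $f$ and by $g$ on the resulting curve cohomologies by hand. Your approach instead stays on $U$, uses the Koszul resolution of $\mathcal O_Z$ by line bundles, and reads off $H^2(x_1,x_2;N)=0$ and $H^1(x_1,x_2;N)\cong\operatorname{coker}\bigl(R\to\bigoplus_nH^0(Z,\mathcal O_Z(n))\bigr)$ from the two-row hypercohomology spectral sequence; the modification hypothesis is then used only at the very end, to place $Z$ away from $E$ so that the restriction map factors through global sections on the affine $U_0$ and is therefore surjective. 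Your argument is cleaner in that it avoids both formal functions and Brodmann's theorem, and it makes transparent exactly where the ``modification of an affine'' hypothesis enters. The paper's approach, on the other hand, gives a more concrete description of $M/hM$ itself as built from cohomology of the exceptional curves, which may be useful for computations. One small point worth making explicit in your write-up: the Koszul complex of sheaves is exact on \emph{all} of $U$, not just near $Z$; away from $Z$ one of $x_1,x_2$ is a unit, and at points of $Z$ the pair is a system of parameters in the two-dimensional Cohen--Macaulay local ring, hence a regular sequence.
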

\demo Since $X\setminus \C$ is a modification of an affine, we have a proper map $\pi: X\setminus \C \lto V$ with $V$ affine. Then there are finitely many fundamental points $P_1, \ldots, P_s \in V$. If we let $E_i=\pi^{-1}(P_i)$, then $\pi: X\setminus \C \lto V$ is an isomorphism outside $E_i$ and $P_i$. Using the Leray spectral sequence for $\pi$ and the fact that $V$ is affine, we find
$$ (M/hM)_n=H^1(X\setminus \C, \mathcal O(n))=H^0(V, R^1\pi_* \mathcal O(n))\, .
$$
The sheaves $R^1\pi_* \mathcal O(n)$ are coherent and are supported at the points $P_i$. Therefore, 
$$H^0(V, R^1\pi_* \mathcal O(n))=R^1\pi_* \mathcal O(n)\, .$$

In addition  the modules $R^1\pi_* \mathcal O(n)$ are of finite length and are equal to their own completions over the local rings at $P_i$. 
Write $E=\cup E_i$. Now the theorem on formal functions \cite[III, 11.1]{AG} shows that
$$ R^1\pi_* \mathcal O(n)=\varprojlim H^1(E_{\nu}, \mathcal O_{E_{\nu}}(n))
$$
where $E_{\nu}$ is the closed subcheme of $X\setminus \C$ defined by $\mathcal I^{\nu}_E$, where $\mathcal I_E=\pi^*(\sum \m_{P_i})$ defines the inverse image scheme of the $P_i$. The maps in the inverse system come from the short exact sequences
$$0\lto \mathcal I_E^{\nu}/\mathcal I_E^{\nu+1} \lto \mathcal O_{E_{\nu+1}}\lto  \mathcal O_{E_{\nu}} \lto 0\, .
$$
Since the $E_{\nu} $ are curves, the $H^1$ functor is right exact, so the maps of the inverse system are all surjective. Furthermore, since $(M/hM)_n$ is finite-dimensional for each $n$, it follows that the maps in the inverse system, for each $n$, are eventually constant. 

Now we look for coregular elements. Since elements of the ideal $I$ cut out the curve $\mathcal C$ we can find elements $f,g\in I$ whose zero-sets meet the curve $E$ in finitely many points all distinct from each other. It may happen that the schemes $E_{\nu}$ for various $\nu$, have embedded points. However, according to a theorem of Brodmann \cite[1]{B}, the union for all 
$\nu$ of the sets of embedded points of $E_{\nu}$ is a finite set. Therefore, we may assume that $f,g$ meet each $E_{\nu}$ in distinct points, none of which is embedded for $E_{\nu}$.

Since $f$ does not meet $E_{\nu}$ at any embedded points, we have exact sequences
$$0\lto \mathcal O_{E_{\nu}}(n) \stackrel{f}\lto \mathcal  O_{E_{\nu}}(n+d) \lto \mathcal  O_{Z_{\nu}}(n+d) \lto 0$$
where $Z_{\nu}$ is the scheme of zeros of $f$ in $E_{\nu}$. Since $Z_{\nu}$ is a zero dimensional scheme, $H^1(E_{\nu}, \mathcal  O_{Z_{\nu}}(n+d))=0$. Hence multiplication by $f$ is surjective on $\oplus_n H^1(E_{\nu},  \mathcal O_{E_{\nu}}(n)))=H^1(E_{\nu}, \oplus_n \mathcal O_{E_{\nu}}(n)))$ for each $\nu$. Since the maps of the inverse limit above are eventually constant in each degree, it follows that multiplication by $f$ is surjective on $M/hM$. Thus $f$ is a coregular element for $M/hM$. 

The kernel of multiplication by $f$ on $H^1(E_{\nu}, \mathcal O_{E_{\nu}}(n))$ is a quotient of  $H^0(E_{\nu}, \mathcal  O_{Z_{\nu}}(n+d))$. Since $g$ does not vanish at the points of $Z_{\nu}$, it follows that $g$ acts isomorphically on $ \oplus_n H^0(E_{\nu}, \mathcal O_{E_{\nu}}(n)))=H^0(E_{\nu}, \oplus_n \mathcal O_{E_{\nu}}(n)))$. Thus multiplication by $g$ is surjective on  the kernel $0:_{H^1(E_{\nu},  \oplus_n \mathcal O_{E_{\nu}}(n))} (f)$ for each $\nu$. Again, since the maps in the inverse system are eventually isomorphisms, it follows that multiplication by $g$ is surjective on $0:_{M/hM}(f)$, so that $(f,g)$ form a coregular sequence of length two for $M/hM$. 
\QED

\bs

\section{Intersection of three surfaces}\label{S5}

The first example was in Perron's paper \cite{P}, where he showed that Vahlen's curve, a rational quintic curve with a single quadrisecant, is an intersection of three surfaces in $\mathbb P^3$. Then Kneser \cite{Kn} showed that any curve in $\mathbb P^3$ is the intersection of three surfaces. This was generalized by Eisenbud and Evans \cite{EE}, and independently Storch \cite{S} (in the affine case only), to show that any variety in affine or projective $n$-space is an intersection of $n$ hypersurfaces.

In this section 
we interpret the condition for three polynomials   $f, g, h$ to cut out a subvariety of codimension 2 in $\mathbb P^n$ set-theoretically, in terms of the Koszul cohomology of certain local cohomology modules. We start with an auxiliary result, which will be used for the homogeneous coordinate ring of a hypersurface.

\begin{proposition}\label{6.1} Let $B$ be a ring, let $J$ be an ideal, let $f,g\in J,$ and assume that $f$ is not a zero-divisor in $B$. The following condition are equivalent  
\begin{enumerate}
\item $\sqrt{J}=\sqrt{(f,g)}$
\item  Let $M_i=H^i_J(B)$ for $i\ge1$
\begin{enumerate}
\item $M_i=0$ for $i>2$
\item $f,g$ is a coregular sequence for $M_2$
\item The natural map  (defined in the proof)
$$\delta: H^0(f,g;M_2)\lto H^2(f,g; M_1)$$
is surjective.
\end{enumerate}
\end{enumerate}
\end{proposition}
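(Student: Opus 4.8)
The plan is to use that $f$ is a non-zero-divisor to descend to $C=B/fB$ and turn the two-element problem into a one-element problem. Since $f\in J$ is a non-zero-divisor, $H^0_J(B)=\Gamma_J(B)=0$, so $M_0=0$. Write $\overline{g}$ for the image of $g$ in $C$ and $D=B/(f,g)B=C/\overline{g}C$. Because $f\in J$, taking radicals commutes with reduction modulo $(f)$, so condition (1) is equivalent to $\sqrt{JC}=\sqrt{\overline{g}C}$ in $C$; by Lemma~\ref{zero}, applied over $C$ to the finitely generated module $D$, this is in turn equivalent to $H^i_J(D)=0$ for all $i>0$. (We may assume $J$ is proper, the case $J=B$ being trivial.) Setting $N_i=H^i_J(C)$, I would prove the proposition by chaining two equivalences: first $(1)\iff[\,N_i=0\text{ for }i\ge2\text{ and }\overline{g}N_1=N_1\,]$, then $[\,N_i=0\text{ for }i\ge2\text{ and }\overline{g}N_1=N_1\,]\iff[(a)\wedge(b)\wedge(c)]$.

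The first equivalence is a one-element analogue of Theorem~\ref{Hellus}. For the forward direction, the length-one \v{C}ech complex $C\lto C_{\overline{g}}$ computes $H^\bullet_{(\overline{g})}=H^\bullet_{JC}$, forcing $N_i=0$ for $i\ge2$ and $N_1=C_{\overline{g}}/\im C$, on which $\overline{g}$ is plainly surjective. For the converse I would show $H^i_J(D)=0$ for $i>0$. The hypothesis $N_i=0$ for $i\ge2$ says $\operatorname{cd}_{JC}(C)\le1$; since the cohomological dimension of a finitely generated module over a Noetherian ring is bounded by that of the ring, $H^i_J(D)=H^i_J(\overline{g}C)=0$ for all $i\ge2$. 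The sequence $0\lto\overline{g}C\lto C\lto D\lto0$ then gives $H^1_J(D)=\coker\!\big(H^1_J(\overline{g}C)\lto N_1\big)$, and this cokernel vanishes because the composite $N_1\lto H^1_J(\overline{g}C)\lto N_1$ coming from $C\overset{\overline{g}}{\lto}\overline{g}C\hookrightarrow C$ is multiplication by $\overline{g}$, which is surjective by hypothesis. Lemma~\ref{zero} then returns condition (1).

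It remains to match $N_i=0$ $(i\ge2)$ and $\overline{g}N_1=N_1$ with (a)--(c); this is where $\delta$ is defined. The long exact sequence of $H^\bullet_J$ for $0\lto B\overset{f}{\lto}B\lto C\lto0$, together with $M_0=0$, gives $N_0=0:_{M_1}f$ and short exact sequences $0\lto M_i/fM_i\lto N_i\lto 0:_{M_{i+1}}f\lto0$ for $i\ge1$, the connecting maps being multiplication by $f$. Thus $N_i=0$ for all $i\ge2$ is equivalent to $fM_2=M_2$ together with $M_i=0$ for $i>2$; for the latter one notes that the sequences make $f$ bijective on $M_i$ for $i\ge3$, while Lemma~\ref{ann} makes every element $f$-power-torsion, forcing $M_i=0$. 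This is precisely (a) and the first half of (b). I then define $\delta$ as the snake-lemma connecting homomorphism for multiplication by $g$ on $0\lto M_1/fM_1\lto N_1\lto 0:_{M_2}f\lto0$, under the identifications $H^0(f,g;M_2)=0:_{M_2}(f,g)$ and $H^2(f,g;M_1)=M_1/(f,g)M_1$. The resulting six-term exact sequence shows that $g$ is surjective on $0:_{M_2}f$ (the second half of (b)) once $\overline{g}N_1=N_1$, and that, granting (b), there is an isomorphism $N_1/gN_1\cong\coker\delta$; hence (c) is equivalent to $\overline{g}N_1=N_1$. Combining the three equivalences yields the proposition.

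The main obstacle is twofold. First, one must define the ``natural map'' $\delta$ so that (b) and (c) together capture exactly the surjectivity of $\overline{g}$ on $N_1$; the snake-lemma identification above is the crux, and keeping it canonical (independent of the auxiliary splittings) is the delicate bookkeeping. Second, in the converse of the one-element step one must control $H^\bullet_J(D)$ for a module over which $\overline{g}$ need not be a non-zero-divisor. I avoid imposing any dimension bound on $B$ by deducing the top-degree vanishing solely from $\operatorname{cd}_{JC}(C)\le1$ via the support-monotonicity of cohomological dimension for finitely generated modules, so the argument holds over an arbitrary Noetherian ring $B$.
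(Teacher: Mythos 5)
Your proof is correct and is essentially the paper's argument: the same long exact sequence for $0\to B\stackrel{f}{\lto}B\lto B/fB\lto 0$ (using $\Gamma_J(B)=0$), the same snake lemma for multiplication by $g$ on $0\lto M_1/fM_1\lto H^1_J(B/fB)\lto 0:_{M_2}f\lto 0$ (so your $\delta$ coincides with the paper's ``natural map''), and for the converse the same dévissage bounding $H^i_J$ on finitely generated $B/fB$-modules followed by Lemma~\ref{zero}. The only differences are organizational: you route both implications through the explicit intermediate condition on $N_i=H^i_J(B/fB)$ and compute $H^1_J(B/(f,g))$ from $0\lto \overline{g}C\lto C\lto D\lto 0$, where the paper instead invokes right-exactness of $H^1_J(-)$ on finitely generated $B/fB$-modules applied to the presentation $C\stackrel{g}{\lto}C\lto D\lto 0$ --- the same dévissage in slightly different clothing.
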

\begin{proof} If $f,g$ is a regular sequence in $B$, then $M_1$ is automatically zero and this statement follows from Theorem~\ref{Hellus}.

We first show that (1) implies (2). Part (a) follows from the computation of local cohomology using the $\check{\rm C}$ech complex.

For part (b) consider the short exact sequence
$$0\lto B \stackrel{f}\lto B \lto B/fB\lto 0\, .$$
Applying local cohomology we obtain the long exact sequence
$$\ldots \lto M_1 \stackrel{f}\lto M_1 \lto H^1_J(B/fB) \lto M_2 \stackrel{f}\lto M_2 \lto H^2_J(B/fB)\lto 0$$

We may assume that $J$ is defined by $f,g$. Then in the ring $B/fB$ the ideal $J$ is defined by $g$, so $H^2_J(B/fB)=0$. We conclude that multiplication by $f$ is surjective on $M_2$. 

Write $K={\rm ker}(M_2 \stackrel{f}\lto M_2)$ and $Q={\rm coker} (M_1 \stackrel{f}\lto M_1 )$. Thus we obtain a short exact sequence 
$$0\lto Q \lto H^1_J(B/fB) \lto K\lto 0\, .$$
Multiplication by $g$ gives a commutative diagram

$$\begin{tikzcd} 0 \arrow{r} &Q \arrow{r} \arrow{d}{g} & H^1_J(B/fB)  \arrow[two heads]{d}{g}\arrow{r} &K\arrow{d}{g} \arrow{r} &0 \\
0 \arrow{r} &Q \arrow{r}  & H^1_J(B/fB) \arrow{r} & K \arrow{r} &0 
 \end{tikzcd} \, .$$

The middle map is surjective since $J(B/fB)$ is generated by $g$, so using the Snake Lemma, we find that the coboundary map
$$\delta: {\rm ker}(K \stackrel{g}\lto K)\lto {\rm coker} (Q \stackrel{g}\lto Q )$$
is surjective, and the last map in the diagram $K \stackrel{g}\lto K$  is surjective as well. This last assertion shows by definition that $f,g$ is a coregular sequence for $M_2$.

To prove (c) we need only to observe that  ${\rm ker}(K \stackrel{g}\lto K)=H^0(f,g;M_2)$ and ${\rm coker} (Q \stackrel{g}\lto Q )=H^2(f,g; M_1)$. 

We now show that (2) implies (1). Using the hypotheses (a), (b), (c), and running the argument backwards, we find 
$$H^2_J(B/fB)=0 \quad \mbox{and} \quad H^1_J(B/fB)\stackrel{g}\lto H^1_J(B/fB) \quad \mbox{is surjective}\, .$$
Since $H^i_J(B/fB)=0$ for $i\ge 2$, a standard d\'evissage shows that $H^i_J(N)=0$ for any finitely generated $B/fB$-module. Thus the functor $H^1_J(-)$ is right exact  for finitely generated $B/fB$-modules. Therefore $H^i_J(B/(f,g)B)=0$ for all $i>0$. The latter implies that ${\rm Supp}(B/(f,g)B)\subset V(J)$ according to Lemma~\ref{zero}, hence (1) holds. 
\end{proof}

\begin{theorem} Let $A$ be a ring, $I$ an ideal, and let $f,g,h\in I$. Assume that $(f,h)$ is a regular sequence in $A$. Assume also that $H^i_I(A)=0$ for $i>2$, and let $M=H^2_I(A)$. Then the following are equivalent
\begin{enumerate}
\item $\sqrt{I}=\sqrt{(f,g,h)}$
\item $H^i(f,g,h;M)=0$ for $i\ge 2$.
\end{enumerate}
\end{theorem}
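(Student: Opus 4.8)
The plan is to reduce the statement to Proposition~\ref{6.1} by passing to the hypersurface ring $B=A/hA$. Since $A$ is Noetherian local or standard graded and $f,h$ are (homogeneous) elements, the regular sequence $(f,h)$ may be permuted to $(h,f)$; thus $h$ is a non-zero-divisor on $A$ and $f$ is a non-zero-divisor on $B$. Because $(f,h)$ is a regular sequence contained in $I$ we also have $H^i_I(A)=0$ for $i<2$, so together with the hypothesis $H^i_I(A)=0$ for $i>2$ the module $M=H^2_I(A)$ is the only non-vanishing local cohomology of $A$. First I would apply $\Gamma_I$ to the short exact sequence $0\to A\xrightarrow{h}A\to B\to 0$ and run the long exact sequence of local cohomology. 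Using the vanishing just noted, this identifies $H^1_J(B)\cong 0:_M h$, $H^2_J(B)\cong M/hM$, and $H^i_J(B)=0$ for all other $i$, where $J=IB$. Finally, since $h\in I$, one checks directly that $\sqrt{I}=\sqrt{(f,g,h)}$ in $A$ if and only if $\sqrt{J}=\sqrt{(f,g)}$ in $B$.

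With these identifications in hand, Proposition~\ref{6.1} applied to $(B,J,f,g)$ says that $\sqrt{J}=\sqrt{(f,g)}$ is equivalent to the conjunction of: (a) $H^i_J(B)=0$ for $i>2$, which holds automatically by the computation above; (b) $f,g$ is a coregular sequence for $M_2:=M/hM$; and (c) the connecting map $\delta\colon H^0(f,g;M_2)\to H^2(f,g;M_1)$ is surjective, where $M_1:=0:_M h$. Thus it remains to show that conditions (b) and (c) together are equivalent to condition (2) of the theorem, namely $H^i(f,g,h;M)=0$ for all $i\ge 2$.

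To make this last translation I would analyze the Koszul complex $K^\bullet(f,g,h;M)$ as the total complex of the double complex $K^\bullet(f,g;A)\otimes_A\bigl[M\xrightarrow{h}M\bigr]$, that is, $K^\bullet(f,g;-)$ applied to $K^\bullet(h;M)$. Taking cohomology in the $h$-direction first produces a spectral sequence with $E_2^{p,0}=H^p(f,g;M_1)$ and $E_2^{p,1}=H^p(f,g;M_2)$, whose only possibly non-zero differential is $d_2\colon H^0(f,g;M_2)\to H^2(f,g;M_1)$. Since the Koszul cohomology of two elements vanishes above degree $2$, the spectral sequence degenerates at $E_3$, yielding $H^3(f,g,h;M)\cong H^2(f,g;M_2)$ and a short exact sequence $0\to\coker d_2\to H^2(f,g,h;M)\to H^1(f,g;M_2)\to 0$, with $H^i(f,g,h;M)=0$ for $i\ge 4$ automatically. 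Hence $H^i(f,g,h;M)=0$ for all $i\ge 2$ if and only if $H^1(f,g;M_2)=H^2(f,g;M_2)=0$ and $d_2$ is surjective. By Theorem~\ref{Kcoh} the first two vanishings are exactly condition (b), while the surjectivity of $d_2$ is exactly condition (c)---provided one identifies $d_2$ with the map $\delta$.

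The main obstacle is precisely this identification of the spectral-sequence differential $d_2$ with the connecting map $\delta$ of Proposition~\ref{6.1}. Both are natural maps $H^0(f,g;M_2)\to H^2(f,g;M_1)$, and I expect to verify their equality by checking that each computes the cup product with the single obstruction class of the two-step complex linking $M_1$ and $M_2$; the key input is that $\mathbf R\Gamma_J(B)$ is isomorphic, up to a shift, to $K^\bullet(h;M)$, which is forced by the fact that $\mathbf R\Gamma_I(A)\simeq M[-2]$ under our vanishing hypotheses. Alternatively, one can avoid derived-category language and instead match the two maps directly, running the Snake Lemma argument of Proposition~\ref{6.1} in parallel with the double-complex differential. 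Once $d_2=\pm\delta$ is established, combining the three equivalences above proves the theorem.
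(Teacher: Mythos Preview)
Your proposal is correct and follows essentially the same route as the paper: pass to $B=A/hA$, identify $M_1=H^1_J(B)$ and $M_2=H^2_J(B)$ with $0:_Mh$ and $M/hM$ via the long exact sequence in local cohomology, reduce condition (1) to Proposition~\ref{6.1}, and then analyze the double complex $K^\bullet(f,g;M)\xrightarrow{h}K^\bullet(f,g;M)$ to match the vanishing $H^i(f,g,h;M)=0$ for $i\ge 2$ with conditions (b) and (c) there. The long exact sequence you extract from the two-row spectral sequence is exactly the one the paper writes down.

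The only difference is one of care: you flag the identification $d_2=\pm\delta$ as the main obstacle, whereas the paper simply labels the spectral-sequence connecting map by the symbol $\delta$ and declares the equivalence with condition (c) of Proposition~\ref{6.1} without further comment. So your concern is legitimate, but it is a point the paper itself leaves implicit; an explicit element-level chase (lift $\alpha\in 0:_{M_2}(f,g)$ to $M$, write $f\tilde\alpha=h\beta_1$, $g\tilde\alpha=h\beta_2$, and compare $g\beta_1-f\beta_2\in M_1$ with the Snake-Lemma output through $H^1_J(B/fB)$) confirms the agreement up to sign, as you anticipate.
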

\begin{proof}
Let $B=A/hA$, let $J=IB$, and notice that $f$ is regular on $B$. We want to apply Proposition~\ref{6.1} to $B$, $J$, and $f,g \in J$. Notice that part (1) of Proposition~\ref{6.1} is clearly equivalent to   $\sqrt{I}=\sqrt{f,g,h}$. So it is enough to show that $H^i(f,g,h;M)=0$ for $i\ge 2$ is equivalent to assertion (2) of Proposition~\ref{6.1}.

We use the notation of Proposition~\ref{6.1}, in particular we let $M_i=H^i_J(B)$ for $i\ge1$. Since $h$ is a regular element in $A$ and $H^i_I(A)=0$ for $i\not=2$, we have the exact sequence
$$ 0\lto M_1\lto M\stackrel{h}\lto M\lto M_2\lto0\,,
$$
and note that all $M_i=0$ for $i>2$. We need to show that $H^i(f,g,h;M)=0$ for $i\ge 2$ is equivalent to (b) and (c) of Proposition~\ref{6.1}. 

Consider the map of Koszul complexes $K^{\bullet}(f,g;M)\stackrel{h}\lto K^{\bullet}(f,g;M)$. The cohomology of the total complex is simply $H^i(f,g,h;M)$. The spectral sequence of the double complex degenerates to give a long exact sequence
$$0\lto H^1(f,g; M_1)\lto H^1(f,g,h; M)\lto H^0(f,g; M_2)\stackrel{\delta}\lto $$
$$\hspace{1cm}H^2(f,g; M_1)\lto H^2(f,g,h; M)\lto H^1(f,g; M_2)\lto 0$$
and the isomorphism
$$H^3(f,g,h; M)\cong H^2(f,g; M_2)\, .$$

The vanishing of $H^2(f,g,h; M)$ and $H^3(f,g,h; M)$ is equivalent to the vanishing of $H^1(f,g; M_2)$ and $H^2(f,g; M_2)$, plus the surjectivity of $\delta$. Hence $H^i(f,g,h;M)=0$ for $i\ge 2$ is equivalent to (b) and (c) of Proposition~\ref{6.1} by Theorem~\ref{Kcoh}. 
\end{proof}

\begin{corollary} If $\mathcal C$ is any curve in $\mathbb P^3$, then there exists a surface $X$ containing $\mathcal C$, defined by $h=0$, such that $M/hM$ has $I$-codepth 2, where $M=H^2_I(A)$. 

\end{corollary}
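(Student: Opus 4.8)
The plan is to present $\mathcal C$ as a set-theoretic intersection of three surfaces, declare $X$ to be the zero locus of one of them, and then extract the coregularity of the remaining two on $M/hM$ directly from Proposition~\ref{6.1}.

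First I would invoke the theorem of Kneser \cite{Kn} (or Eisenbud--Evans \cite{EE}): every curve in $\mathbb P^3$ is cut out set-theoretically by three surfaces, so there are homogeneous $f,g,h\in I$ with $\sqrt I=\sqrt{(f,g,h)}$. Replacing $f,g,h$ by suitable powers leaves the radical unchanged and makes them of a common degree, after which general $k$-linear combinations of them may be substituted for $f,g,h$ without disturbing $\sqrt{(f,g,h)}=\sqrt I$. Because $\sqrt I$ has height $2$, the three forms do not all lie in a single principal ideal, so a general pair $(f,h)$ forms an $A$-regular sequence. Set $X=V(h)$, $B=A/hA$, $J=IB$. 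Reducing $\sqrt I=\sqrt{(f,g,h)}$ modulo $h$ (legitimate since $h\in I$ and $h\in(f,g,h)$) gives $\sqrt J=\sqrt{(f,g)B}$, so $f,g$ cut out $\mathcal C$ set-theoretically on $X$; moreover $f$ is a non-zero-divisor on $B$ because $(f,h)$ is regular.

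Next I would record the local-cohomology input. Since $\mathcal C$ is connected, $H^i_I(A)=0$ for every $i\neq 2$ (the vanishing for $i>2$ is exactly the connectedness fact already used in Setting~\ref{ass} and Proposition~\ref{affine}), so $M=H^2_I(A)$ is the only nonzero local cohomology module. Applying the long exact sequence of $H^\bullet_I$ to $0\lto A\stackrel{h}{\lto}A\lto B\lto 0$ and feeding in these vanishings produces
\[
0\lto M_1\lto M\stackrel{h}{\lto}M\lto M_2\lto 0,
\]
where $M_i=H^i_J(B)$ and $M_i=0$ for $i>2$. In particular $M_2\cong M/hM$.

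Finally, Proposition~\ref{6.1} applied to $B$, $J$, and $f,g\in J$ with $f$ a non-zero-divisor on $B$ shows that $\sqrt J=\sqrt{(f,g)}$ forces $f,g$ to be a coregular sequence for $M_2=M/hM$; hence the $I$-codepth of $M/hM$ is at least $2$. For the opposite inequality, $\Supp(M/hM)\subset V(I)$ is the cone over $\mathcal C$, of dimension $2$, so the argument of Proposition~\ref{finite}, carried out over $A/\ann(M/hM)$, bounds the length of any coregular sequence for $M/hM$ by $2$; thus the codepth is exactly $2$. The one hypothesis that must be secured for the statement even to be meaningful is $M/hM\neq 0$: if $\mathcal C$ is not a set-theoretic complete intersection this is automatic for every $X\supseteq\mathcal C$ (otherwise $X\setminus\mathcal C$ would be affine by Proposition~\ref{affine} and $g$ alone would cut out $\mathcal C$ on $X$, making $\mathcal C=X\cap V(g)$ a complete intersection), while in the complementary case one simply chooses $h$ with $X\setminus\mathcal C$ non-affine. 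I expect this last point---arranging $(f,h)$ regular and $M/hM\neq 0$ simultaneously---to be the only genuine obstacle; the cohomological heart of the corollary is immediate from Proposition~\ref{6.1}.
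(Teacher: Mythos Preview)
Your argument is correct and follows the paper's route exactly: invoke Kneser to write $\sqrt I=\sqrt{(f,g,h)}$, arrange that two of the three, say $(f,h)$, form an $A$-regular sequence, set $B=A/hA$, and then read off from Proposition~\ref{6.1} applied to $(B,J,f,g)$ that $f,g$ is a coregular sequence for $M_2=H^2_J(B)=M/hM$. The paper's proof is two sentences and simply omits the justifications you supply (why one may take $(f,h)$ regular, and why $H^2_J(B)=M/hM$, the latter relying on $H^3_I(A)=0$ for a connected curve as in Setting~\ref{ass}).

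Your further remarks on the upper bound and on $M/hM\ne 0$ go beyond what the paper addresses: its proof establishes only codepth $\ge 2$ and is silent on both points. Note that your upper-bound step is not tight as written, since Proposition~\ref{finite} bounds coregular length by $\dim(A/\ann(M/hM))$, and for a non-finitely-generated module one only knows $h\in\ann(M/hM)$, giving $\le \dim B=3$ rather than $2$; likewise, the ``complementary case'' at the end would require producing simultaneously an $h$ with $M/hM\ne 0$ and an $f$ with $(f,h)$ regular and $\sqrt I=\sqrt{(f,g,h)}$, which you do not fully carry out. These are peripheral to the corollary's intended content, however, and the core of your argument matches the paper's.
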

\begin{proof} By Kneser's Theorem, there exist $f,g,h$ with $\sqrt{I}=\sqrt{(f,g,h)}$. We may assume that two of these, say $f,h$, form a regular sequence for $A$. Then if $X$ is defined by $h$, and $B=A/hA$, applying Proposition~\ref{6.1}, we see that $f,g$ form a coregular sequence for $M_2=H^2_J(B)=M/hM$. 
\end{proof}

\bs

\section{Questions}\label{S6}

The first question is a well known, long-standing open problem, which was the motivation for this paper. The remaining questions are consequences. That is to say, a yes answer to the first question would imply a yes answer to all the remaining questions. Conversely a no answer to any of the remaining questions would imply a no answer to the first question. In all these questions $\mathcal C$ is an irreducible curve in $\mathbb P^3$ and $I$ is its homogeneous ideal in the homogeneous coordinate ring $A$ of $\mathbb P^3$.

\begin{question}\label{Q1}{\rm Is every irreducible curve in $\mathbb P^3$ a set-theoretic complete intersection? }
\end{question}

As far as we know, this question was first stated explicitly by Perron.

\begin{question}\label{Q2}{\rm If $\mathcal C$ is an irreducible curve in $\mathbb P^3$, does the module $H^2_I(A)$ have codepth $2$? }
\end{question}

By Theorem \ref{Hellus},  Question ~\ref{Q2}  is equivalent to Question \ref{Q1}. 
\begin{question}\label{Q3}{\rm If $\mathcal C$ is an irreducible curve in $\mathbb P^3$, is the module $H^2_I(A)$ quasi cyclic?} 
\end{question}

Question \ref{Q3} follows from Question \ref{Q2} by Theorem \ref{cd>2}. 

\begin{question}\label{Q4}{\rm If $\mathcal C$ is an irreducible curve in $\mathbb P^3$, does the module $H^2_I(A)$ have codepth $\ge 1$? } 
\end{question}

Clearly Question \ref{Q4} is a trivial consequence of Question~\ref{Q2}

\begin{question}\label{Q5}{\rm If $\mathcal C$ is an irreducible curve in $\mathbb P^3$, does there exist a surface $X$ containing $\C$ with $X \setminus \C$ affine?} 
\end{question}

Question \ref{Q5} is equivalent to Question~\ref{Q4} by Proposition~\ref{affine}.

\begin{question}\label{Q6}{\rm If $\mathcal C$ is an irreducible curve in $\mathbb P^3$, does there exist a surface $X$ containing $\C$ with $X \setminus \C$ a modification of an affine?} 
\end{question}

Question \ref{Q6} is a trivial consequence of Question~\ref{Q5}.

\begin{question}\label{Q7}{\rm If $X$ is an irreducible surface containing  an irreducible curve $\mathcal C$ in $\mathbb P^3$, do there exist two more surfaces $Y$ and $Z$ containing $\C$ such that $\C=X\cap Y\cap Z$?} 
\end{question}

Question \ref{Q7} would follow from Question~\ref{Q1}, taking $Y$ and $Z$ to be the surfaces defining $\C$. 

\begin{question}\label{Q8}{\rm If $\mathcal C$ is an irreducible curve in $\mathbb P^3$ and $X$ is a surface defined by a non-coregular element $h\in I$, does the module $H^2_I(A/hA)$ have codepth $2$? } 
\end{question}

Question \ref{Q8} would follow from  Question~\ref{Q2} by Corollary~\ref{ORD}. It would also follow from Question~\ref{Q7} by Proposition~\ref{6.1}. 

\begin{question}\label{Q9}{\rm With $\C, X, h$ as in Question~\ref{Q8}, is the module $H^2_I(A/hA)$ quasi-cyclic? } 
\end{question}

Question \ref{Q9} follows from Question~\ref{Q8} by Proposition~\ref{cd>2}, or from Question~\ref{Q3} by Proposition~\ref{PROP}(iii).

\begin{example} 
\begin{enumerate}
\item  The rational quartic curve (see Example~\ref{e5.3}(c)) is the smallest degree example of a curve for which it is not known whether it is a set-theoretic complete intersection in characteristic zero. In any characteristic $p>0$ it is a set-theoretic complete intersection \cite{CI}. For this curve, the answers to Questions~\ref{Q4} and \ref{Q5} are yes. 
\item For Vahlen's quintic, which is a rational quintic curve $\C$ with a single quadrisecant, we do not know the answer to Question~\ref{Q5}, but the answer to Question~\ref{Q6} is yes, because $\C$ can be found on a nonsingular cubic surface $X$ with divisor class $(2;1, 10^5)$, using Example~\ref{e5.7}(c). 
\item We will show that the answer to Question~\ref{Q7} is also yes for Vahlen's quintic. Let $X$ be a nonsingular cubic surface and let $\C$ be the curve with divisor class $(2;1, 10^5)$. We take $Y$ to be another cubic surface containing $\C$ so that $X\cap Y=\C\cup G\cup T$ where $X\cap Y$ has divisor class $3H=(9;3^6)$ and $G$ is the line $(2; 0,1^5)$, which is the quadrisecant, and $T=(5; 2^6)$ is a twisted cubic curve. We can take $T$ to be irreducible and nonsingular, because for any $T$ in that divisor class, $C\cup G\cup T\sim 3H$, and $X$ being projectively normal, this is the intersection with another cubic surface $Y$. 

We cannot take $Z$ to be another cubic surface, because every cubic surface containing $\C$ also contains its quadrisecant $G$. So we look for a quartic surface $Z$. Then $X\cap Z=C\cup D$ where $D=(10; 3,4^5)$. This is a curve of degree 7 and genus 3. Any curve in this linear system arises as $X\cap Z \setminus \C$. 

To show that $\C=X\cap Y\cap Z$, we must show that the points of  $D\cap (G\cup T)$ are contained in $\C$. Now $D\cdot G=0$, and a general $D$ is irreducible so we may assume $D\cap G=\emptyset$. Observe that $D\cdot T=4$. We need to show that $D$ can be chosen so that the four points of $D\cap T$ are among the 8 intersections of $T$ with $\C$. Since $T$ is a rational curve, for this it will be sufficient to show that the linear system $|D|$ on $X$ maps surjectively to the linear system $|D\cdot T|$ on $T$; that is that the map 
$$\pi: H^0(\mathcal O_X(D))\lto H^0(\mathcal O_T(D\cdot T))$$
is surjective. The cokernel of $\pi$  is $H^1(\mathcal O_X(D\setminus T))$, as can be seen applying cohomology to the short exact sequence
$$ 0 \lto \mathcal O_X(D\setminus T) \lto \mathcal O_X(D) \lto \mathcal O_T(D \cdot T)\lto 0\, .
$$ Now $D\setminus T$ has divisor class $(5;1,2^5)$ and therefore can  be represented by an elliptic curve $E$ of degree 4. From the exact sequence
$$\ldots \lto H^1(\mathcal O_X) \lto H^1(\mathcal O_X(E)) \lto H^1(\mathcal O_E(E)) \lto \ldots
$$
we deduce that $H^1(\mathcal O_X(E))=0$,  since $H^1(\mathcal O_X)=0$ and $E^2=4$ which gives $H^1(\mathcal O_E(E))=0$. Thus the map $\pi$ is surjective. 

Hence we conclude that $\C=X\cap Y\cap Z$, the intersection of two cubics and one quartic surface. 
\end{enumerate}
\end{example}

\bs
\noindent{\bf Acknowledgments.} 
Part of this paper was written at the 
 Centre International de Rencontres Math\'emati\-ques (CIRM) in Luminy, France, while the authors participated in a Recherches en Bin\^{o}me.  We are very appreciative of the hospitality offered by the Soci\'et\'e Math\'ematique de France.

 \bs

\end{document}